\documentclass[11pt]{amsart}
\usepackage{amscd}                                            
\usepackage[arrow,matrix]{xy}
\usepackage{graphicx}
\usepackage{hyperref}
\usepackage{comment}
\usepackage{color}

\newcommand{\green}[1]{\textcolor{green}{#1}}
\newcommand{\blue}[1]{\textcolor{blue}{#1}}
\newcommand{\brown}[1]{\textcolor{brown}{#1}} 
\definecolor{brown}{RGB}{150,100,0}
\usepackage{amsmath, latexsym, amssymb}
\usepackage{soul}
\input xypic
\numberwithin{equation}{section}
\theoremstyle{plain}
\newtheorem{lemma}{Lemma}[section]
\newtheorem{proposition}[lemma]{Proposition}
\newtheorem{theorem}[lemma]{Theorem}

\theoremstyle{definition}
\newtheorem{definition}[lemma]{Definition}
\newtheorem{remark}[lemma]{Remark}
\newtheorem{example}[lemma]{Example}

\DeclareGraphicsRule{.tif}{png}{.png}{`convert #1 `dirname #1`/`basename #1 .tif`.png} 
\begin{document}
\newcommand{\R}{{\mathbb R}}
\newcommand{\C}{{\mathbb C}}
\newcommand{\E}{{\mathbb E}}
\newcommand{\F}{{\mathbb F}}
\renewcommand{\O}{{\mathbb O}}
\newcommand{\Z}{{\mathbb Z}} 
\newcommand{\N}{{\mathbb N}}
\newcommand{\Q}{{\mathbb Q}}
\renewcommand{\H}{{\mathbb H}}

\newcommand{\Aa}{{\mathcal A}}
\newcommand{\Bb}{{\mathcal B}}
\newcommand{\Cc}{{\mathcal C}}    
\newcommand{\Dd}{{\mathcal D}}
\newcommand{\Ee}{{\mathcal E}}
\newcommand{\Ff}{{\mathcal F}}
\newcommand{\Gg}{{\mathcal G}}    
\newcommand{\Hh}{{\mathcal H}}
\newcommand{\Kk}{{\mathcal K}}
\newcommand{\Ii}{{\mathcal I}}
\newcommand{\Jj}{{\mathcal J}}
\newcommand{\Ll}{{\mathcal L}}    
\newcommand{\Mm}{{\mathcal M}}    
\newcommand{\Nn}{{\mathcal N}}
\newcommand{\Oo}{{\mathcal O}}
\newcommand{\Pp}{{\mathcal P}}
\newcommand{\Qq}{{\mathcal Q}}
\newcommand{\Rr}{{\mathcal R}}
\newcommand{\Ss}{{\mathcal S}}
\newcommand{\Tt}{{\mathcal T}}
\newcommand{\Uu}{{\mathcal U}}
\newcommand{\Vv}{{\mathcal V}}
\newcommand{\Ww}{{\mathcal W}}
\newcommand{\Xx}{{\mathcal X}}
\newcommand{\Yy}{{\mathcal Y}}
\newcommand{\Zz}{{\mathcal Z}}

\newcommand{\zt}{{\tilde z}}
\newcommand{\xt}{{\tilde x}}
\newcommand{\Ht}{\widetilde{H}}
\newcommand{\ut}{{\tilde u}}
\newcommand{\Mt}{{\widetilde M}}
\newcommand{\Llt}{{\widetilde{\mathcal L}}}
\newcommand{\yt}{{\tilde y}}
\newcommand{\vt}{{\tilde v}}
\newcommand{\Ppt}{{\widetilde{\mathcal P}}}
\newcommand{\bp }{{\bar \partial}}

\newcommand{\ad}{{\rm ad}}
\newcommand{\Om}{{\Omega}}
\newcommand{\om}{{\omega}}
\newcommand{\eps}{{\varepsilon}}
\newcommand{\Di}{{\rm Diff}}
\newcommand{\vol}{{\rm vol}}
\newcommand{\Pro}[1]{\noindent {\bf Proposition #1}}
\newcommand{\Thm}[1]{\noindent {\bf Theorem #1}}
\newcommand{\Lem}[1]{\noindent {\bf Lemma #1 }}
\newcommand{\An}[1]{\noindent {\bf Anmerkung #1}}
\newcommand{\Kor}[1]{\noindent {\bf Korollar #1}}
\newcommand{\Satz}[1]{\noindent {\bf Satz #1}}

\renewcommand{\a}{{\mathfrak a}}
\renewcommand{\b}{{\mathfrak b}}
\newcommand{\e}{{\mathfrak e}}
\renewcommand{\k}{{\mathfrak k}}
\newcommand{\pg}{{\mathfrak p}}
\newcommand{\g}{{\mathfrak g}}
\newcommand{\gl}{{\mathfrak gl}}
\newcommand{\h}{{\mathfrak h}}
\renewcommand{\l}{{\mathfrak l}}
\newcommand{\sm}{{\mathfrak m}}
\newcommand{\n}{{\mathfrak n}}
\newcommand{\s}{{\mathfrak s}}
\renewcommand{\o}{{\mathfrak o}}
\renewcommand{\so}{{\mathfrak so}}
\renewcommand{\u}{{\mathfrak u}}
\newcommand{\su}{{\mathfrak su}}
\newcommand{\ssl}{{\mathfrak sl}}
\newcommand{\ssp}{{\mathfrak sp}}
\renewcommand{\t}{{\mathfrak t }}
\newcommand{\X}{{\mathfrak X}}

\newcommand{\pb}{{\mathbf p}}
\newcommand{\rk}{{\rm rk}}
\newcommand{\grad}{{\nabla}}
\newcommand{\sgn}{{\text{ sgn}}}

\newcommand{\Cinf}{C^{\infty}}
\newcommand{\la}{\langle}
\newcommand{\ra}{\rangle}
\newcommand{\half}{\scriptstyle\frac{1}{2}}
\newcommand{\p}{{\partial}}
\newcommand{\notsub}{\not\subset}
\newcommand{\iI}{{I}}               
\newcommand{\bI}{{\partial I}}      
\newcommand{\LRA}{\Longrightarrow}
\newcommand{\LLA}{\Longleftarrow}
\newcommand{\lra}{\longrightarrow}
\newcommand{\LLR}{\Longleftrightarrow}
\newcommand{\lla}{\longleftarrow}
\newcommand{\INTO}{\hookrightarrow}

\newcommand{\QED}{\hfill$\Box$\medskip}
\newcommand{\UuU}{\Upsilon _{\delta}(H_0) \times \Uu _{\delta} (J_0)}
\newcommand{\bm}{\boldmath}

\newcommand{\commentgreen}[1]{\green{(*)}\marginpar{\fbox{\parbox[l]{3cm}{\green{#1}}}}}
\newcommand{\commentred}[1]{\textcolor{red}{(*)}\marginpar{\fbox{\parbox[l]{3cm}{\textcolor{red}{#1}}}}}
\newcommand{\commentblue}[1]{\blue{(*)}\marginpar{\fbox{\parbox[l]{3cm}{\blue{#1}}}}}
\newcommand{\commentbrown}[1]{\brown{(*)}\marginpar{\fbox{\parbox[l]{3cm}{\brown{#1}}}}}

\newcommand{\er}{\mathbb{R}}
\newcommand{\ce}{\mathbb{C}}
\newcommand{\be}{\begin{equation}}
\newcommand{\bel}[1]{\begin{equation}\label{#1}}
\newcommand{\qe}{\end{equation}}
\newcommand{\ee}{\end{equation}}
\newcommand{\eeq}{\end{equation}}
\newcommand{\ba}{\begin{eqnarray}}
\newcommand{\ea}{\end{eqnarray}}
\newcommand{\rf}[1]{(\ref{#1})}

\newcommand{\T}{{\mathbf T}}

\title[Singular   statistical models]{The Cram\'er-Rao   inequality   on singular  statistical models  I}

\author{J\"urgen Jost}
\address{Max-Planck-Institute for Mathematics in the Sciences, Leipzig, Germany}
\email{jjost@mis.mpg.de}

\author{H\^ong V\^an L\^e}
\address{Institute of Mathematics CAS,
Zitna 25, 11567 Praha 1, Czech Republic}
\email{hvle@math.cas.cz}

\author{Lorenz Schwachh\"ofer}
\address{Fakult\"at f\"ur Mathematik,
Technische Universit\"at Dortmund,
Vogelpothsweg 87, 44221 Dortmund, Germany}
\email{Lorenz.Schwachhoefer@math.uni-dortmund.de} 

 \date{\today}
 \thanks {H.V.L. is partially supported by RVO: 67985840}

\medskip

\abstract   We  introduce   the notion  of  the essential tangent  bundle of a   parametrized  measure model   and the   notion of reduced   Fisher metric on  a    
(possibly singular)  2-integrable  measure  model.
Using these notions and  a new  characterization  of $k$-integrable parametrized  measure models, we     extend  the  Cram\'er-Rao   inequality  to  $2$-integrable    (possibly singular) statistical models      for  general  $\varphi$-estimations, where $\varphi$  is  a  $V$-valued   feature   function  and $V$ is a topological  vector  space. Thus  we derive    an intrinsic  Cram\'er-Rao inequality   in the most  general terms  of parametric statistics. 
\endabstract

\keywords{singular  $k$-integrable statistical  model,    reduced Fisher metric, Cram\'er-Rao inequality,  $\varphi$-estimation}
\subjclass[2010]{Primary  62F10,  62G07, 62H12}
\maketitle 
 
\tableofcontents

\section{Introduction}\label{sec:intro}

A statistical model, also called a learning machine, is a basic notion in mathematical statistics, statistical learning theory and their  applications \cite{Amari1987, Amari2016, AN2000, Vapnik1999,  Watanabe2009}. 

The basic task is to infer the parameter of the model from observations of samples of the underlying distribution. For that purpose, the map taking the parameters of the model to
probability distributions needs to be one-to-one. Furthermore, for applying the Cram\'er-Rao inequality, the Fisher information matrix of the model should be  positive definite. These are considered as limitations of a statistical model, and a model not satisfying these requirements is called singular (the precise terminology varies somewhat in the literature, see e.g. \cite[Definition 1.7, p. 10]{Watanabe2009}, \cite[p. 12]{BKRW1998}). Such singular statistical models  
appear  in statistics  ubiquitously, however,   and we cannot ignore   singular points  for  estimation  problems
\cite{Watanabe2007, Watanabe2009}. Here, we deal with such possibly singular models. The simple starting observation is that, even if the model parameter cannot be fully inferred, the observations in general will still restrict its possibilities, and even if the Fisher information matrix is degenerate in some directions, there will be others in which it is positive definite, and these can still be used to control some of the variance. In this contribution, we shall set up a systematic mathematical framework to handle that issue, that is, derive estimates that cover cases where the Fisher information matrix is not strictly positive definite.

In this  paper  we  call a point  $\xi \in M$ {\it singular}  if  the Fisher information matrix  at $\xi$ is degenerate,
and we     call      a point $\xi \in M$  {\it unidentifiable}, if  $\# (\pb ^{-1} (\pb (\xi))) \ge 2$, following the  terminology of  Amari and Watanabe.   We should also point out that, in contrast to   \cite{BKRW1998}, we do not  require   that  a regular point  must be an interior
point.  In  particular,   our  statistical models   include  Banach manifolds with  boundary, where  the  boundary   can have singularities, i.e.  the Fisher  metric can be degenerate at  boundary points.
To be general,  as in \cite{Watanabe2009}, we say {\it singular} when we really mean {\it possibly singular}, that is, we always implicitly include  regular 
statistical  models, in particular, when we don't  specify the singular points.

Thus, here we shall deal with such singular statistical models, and our main achievement will be a corresponding version of the Cram\'er-Rao   inequality. 
 Until the  present  paper, see also \cite{AJLS2016} for closely related results, the Cram\'er-Rao's   inequality in the context of parametric statistics   was known to   hold only on     statistical models
where the Fisher information matrix is positive definite, see  Subsection \ref{subs:conclusion}  for  a more  detailed comment.   For that reason,  the singular  statistical models  considered in \cite{Watanabe2007, Watanabe2009}   are  supposed to be  real  analytic varieties so that    parameter   estimation  problems  can  be simplified. In our   paper 
we     introduce  the  notions of   essential tangent  bundle, reduced  Fisher metric,  visible functions and their generalized  gradient and pre-gradient.   Using these new notions  and a new characterization 
of $k$-integrable parametrized measure  models, 
we  extend the   Cram\'er-Rao  inequality    on   singular   statistical  models  for  general $\varphi$-regular estimations, where
$\varphi$ is a $V$-valued feature  and $V$ is a topological vector  space.
The  most   complete treatment currently available of
statistical  models in  the context of parametric statistics   has     been developed in  \cite{AJLS2015}, \cite{AJLS2016b}, \cite{AJLS2016}, see    
Subsection \ref{subs:conclusion} for  more detailed comments.  Our  new treatment of the Cram\'er-Rao  inequality  is  therefore a natural consequence of this new theory of parametric statistics.

  In the subsequent paper \cite{JLS2017b} we      study conditions   for the existence of efficient estimators on singular statistical models.  In particular  we    prove the existence of  (possibly  biased)
efficient estimators  on a class  of strictly singular  finite dimensional  statistical models  and the existence  of biased efficient estimators  on a large class of Fukumizu's   infinite  dimensional   exponential  manifolds.  

We are working within the context of parametric statistics.  Note that,  unlike the     accepted convention in \cite{BKRW1998}, \cite{Wasserman2006}, our  parametric   statistical models  may be  infinite dimensional.
Since nonparametric statistics is conceptually and methodologically different, naturally also the Cram\'er-Rao inequality takes a somewhat different form there. Nevertheless, also the nonparametric approach as in  \cite{Janssen2003} can deal with possibly singular situations. In that context, the most advanced version of the Cram\'er-Rao inequality seems to be that of Janssen \cite{Janssen2003}. At the end of this paper, we shall compare his version with ours, to the extent that a comparison between a parametric and a nonparametric approach is feasible.

Our paper is organized as follows.

In   Section \ref{sec:reduced}  we recall the  notion  of  parametrized  measure (resp. statistical)
model that has been introduced in \cite{AJLS2015} and refined  in \cite{AJLS2016b}.   We  prove  a new  characterization of $k$-integrable
parametrized measure  models. Then
we  introduce the notion of the essential tangent space    and reduced Fisher metric  which are crucial for the   extension of the Cram\'er-Rao inequality to  singular statistical models.

In Section \ref{sec:vis}  we  introduce  a  large class of visible  functions
on statistical models that encompass    estimators     considered  in our  general
Cram\'er-Rao  inequality. 
 We also introduce
the notion of   the generalized gradient   and a pre-gradient  of  a visible function. 
We prove  that
differentiation  under the integral sign is valid for  regular   visible functions  associated to  estimators. 
This  is a   technical  important point  in the proof  of   the Cram\'er-Rao  inequality in  Section \ref{sec:CR}.   At the end  of  Section \ref{sec:vis}  we illustrate  our theory in the  case   of finite  sample  spaces,  whose results shall be used  in the  second part  of this paper \cite{JLS2017b}.

In Section \ref{sec:CR} we      prove   a general  Cram\'er-Rao  inequality  
and  derive from it classical  Cram\'er-Rao inequalities. We also compare  
	our results with  other  generalizations  of the   Cram\'er-Rao inequalities.
	Finally we summarize our main contributions  at the end of the paper.

\

{\bf  Notations}.  For a   measurable  space $\Om$  and a finite measure  $\mu_0$ on $\Om$  we denote  \begin{eqnarray*}
\Pp(\Om) & := & \{ \mu \;:\; \mu\; \mbox{a probability measure on $\Om$}\}\\
\Mm(\Om) & := & \{ \mu \;:\; \mu\; \mbox{a finite measure on $\Om$}\}\\
\nonumber
\Ss(\Om) & := & \{ \mu \;:\; \mu\; \mbox{a signed finite measure on $\Om$}\}\\
\nonumber
  \Ss(\Om, \mu_0) & = & \{\mu = \phi \, \mu_0 \; : \; \phi \in L^1(\Om, \mu_0) \}.
\end{eqnarray*}

Then $\Ss(\Om)$ is a Banach space whose norm $\| \cdot \|_{TV}$ is given by the total variation, and $\Ss(\Om, \mu_0) \subset \Ss(\Om)$ is a closed subspace whose norm is given by $\|\phi \mu_0\|_{TV} = \|\phi\|_1$, where the latter refers to the the norm in $L^1(\Om, \mu_0)$.

\section{$k$-integrable    parametrized  measure  models  and reduced  Fisher  metric}\label{sec:reduced}

In this section  we  recall  the notion  of a $k$-integrable parametrized measure model (Definition \ref{def:param-measure-model}) that has been introduced in \cite{AJLS2016b}.  The  concept of $2$-integrability (resp. $3$-integrability)  is required  for  the right concept  of  the    Fisher  metric (resp. the Amari-Chentsov tensor)   on parametrized measure models, see \cite{AJLS2015}. 
 We prove the existence  of   a   dominating measure  under a mild condition (Proposition \ref{prop:mfld-dom}), which is important  for our proof  of the  general Cram\'er-Rao inequality in    later sections.
Then   we give a characterization of $k$-integrability (Theorem \ref{thm:k-integr}), which is important  for  later deriving   the classical   Cram\'er-Rao inequalities   from our  general  Cram\'er-Rao  inequality.
Finally we introduce  the notion of {\it essential tangent space}
of  a 2-integrable parametrized  measure  model (Definition \ref{def:ess})  and  the  related notion of {\it reduced Fisher metric}.

\subsection{A characterization of $k$-integrable  parametrized  measure  models}\label{subs:density}

Here is the definition of a parametrized measure model from \cite[Definition 4.1]{AJLS2016b}.

\begin{definition} \label{def:param-measure-model}
Let $\Om$ be a measurable space.
\begin{enumerate}
\item
A {\em parametrized measure model } is a triple $(M, \Om, \pb)$ where $M$ is a (finite or infinite dimensional) Banach manifold and $\pb: M \to \Mm(\Om) \subset \Ss(\Om)$ is a Frech\'et-$C^1$-map, which we shall  call simply a $C^1$-map.
\item
The triple $(M, \Om, \pb)$ is called a {\em statistical model } if it consists only of probability measures, i.e., such that the image of $\pb$ is contained in $\Pp(\Om)$.
\item
We call such a model {\em dominated by $\mu_0$} if the image of $\pb$ is contained in $\Ss(\Om, \mu_0)$. In this case, we use the notation $(M, \Om, \mu_0, \pb)$ for this model.
\end{enumerate}
\end{definition}

\begin{remark}\label{rem:dom}  In classical mathematical statistics  the existence of  a dominating  measure  for a statistical  model  is an essential  requirement, see  e.g.   Condition $A_\mu$ in \cite[p. 67]{Borovkov1998}. Under this condition    important      notions    e.g.  Fisher metric, Kullback-Leibler divergence,   and MLE
have  been introduced   and the  estimation problem  of   probability  measures  is    called   the problem of  probability density estimation.
\end{remark}

The existence of a dominating measure $\mu_0$ is not a strong restriction, as the following shows.

\begin{proposition} \label{prop:mfld-dom}
Let $(M, \Om, \pb)$ be a parametrized measure model. If $M$ contains a countable dense subset, e.g., if $M$ is a finite dimensional manifold, then there is a measure $\mu_0 \in \Mm(\Om)$ dominating all measures $\pb(\xi)$.
\end{proposition}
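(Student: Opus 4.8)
The plan is to construct $\mu_0$ as a weighted countable sum of the measures $\pb(\xi_n)$ over a countable dense subset $\{\xi_n\}_{n\in\N}\subset M$, and then to upgrade domination from this dense set to all of $M$ by combining the continuity of $\pb$ with the closedness of $\Ss(\Om,\mu_0)$ in $\Ss(\Om)$ that was recorded in the Notations.

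First I would fix such a dense sequence $\{\xi_n\}_{n\in\N}$ and set
\[
\mu_0 \;:=\; \sum_{n\in\N} c_n\, \pb(\xi_n), \qquad c_n := 2^{-n}\bigl(1+\|\pb(\xi_n)\|_{TV}\bigr)^{-1}.
\]
Since each $\pb(\xi_n)$ is a nonnegative finite measure with $\|\pb(\xi_n)\|_{TV}=\pb(\xi_n)(\Om)$, the choice of weights gives $\sum_n c_n\,\pb(\xi_n)(\Om)\le \sum_n 2^{-n}=1$, so the partial sums form an increasing, uniformly mass-bounded sequence of measures; by monotone convergence $\mu_0$ is countably additive and $\mu_0\in\Mm(\Om)$ is a genuine finite measure. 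Because every $c_n>0$ and all summands are nonnegative, $\mu_0(A)=0$ forces $\pb(\xi_n)(A)=0$ for each $n$; hence $\pb(\xi_n)\ll\mu_0$, i.e. $\pb(\xi_n)\in\Ss(\Om,\mu_0)$ for all $n$.

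The second step extends domination to an arbitrary $\xi\in M$. By density there is a sequence $\xi_{n_k}$ drawn from the dense set with $\xi_{n_k}\to\xi$ in $M$, and since $\pb$ is a $C^1$-map into $\Ss(\Om)$ it is in particular continuous for the total-variation norm, so $\|\pb(\xi_{n_k})-\pb(\xi)\|_{TV}\to 0$. Now I invoke the fact recorded above that $\Ss(\Om,\mu_0)$ is a closed subspace of the Banach space $(\Ss(\Om),\|\cdot\|_{TV})$. As the approximating measures $\pb(\xi_{n_k})$ all lie in $\Ss(\Om,\mu_0)$ and converge in norm to $\pb(\xi)$, closedness yields $\pb(\xi)\in\Ss(\Om,\mu_0)$, that is $\pb(\xi)\ll\mu_0$. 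Since $\xi$ was arbitrary, $\mu_0$ dominates every $\pb(\xi)$.

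The only genuine subtlety — and the reason the countable-dense hypothesis is needed — is precisely this passage from the dense set to all of $M$. Absolute continuity is not preserved under weak or pointwise convergence of measures, but it is preserved under convergence in total variation; this is exactly what the closedness of $\Ss(\Om,\mu_0)$ encodes, and the $C^1$-regularity of $\pb$ is what supplies the required total-variation convergence. The finiteness of $\mu_0$ is a minor bookkeeping point handled by the normalizing weights $c_n$, and plays no essential role in the domination argument itself.
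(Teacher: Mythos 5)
Your proof is correct and follows essentially the same route as the paper: a weighted countable sum of the $\pb(\xi_n)$ over a countable dense subset, followed by the upgrade to all of $M$ via the continuity of $\pb$ and the closedness of $\Ss(\Om,\mu_0)$ in $(\Ss(\Om),\|\cdot\|_{TV})$. The only cosmetic difference is your weight $2^{-n}\bigl(1+\|\pb(\xi_n)\|_{TV}\bigr)^{-1}$ in place of the paper's $2^{-n}\|\pb(\xi_n)\|_{TV}^{-1}$, which incidentally also covers the degenerate case $\pb(\xi_n)=0$.
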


\begin{proof} For the proof, we first observe that for a countable family $\{ \nu_n\; : \; n \in \N\}\subset \Ss(\Om)$ of signed measures, the measure
\[
\mu_0 := \sum_{n=1}^\infty \frac1{2^n \|\nu_n\|_{TV}} |\nu_n|.
\]
dominates all $\nu_n$. Let $(\xi_n)_{n \in \N} \subset M$ be a countable dense subset and let $\mu_0 \in \Mm(\Om)$ dominate all $\pb(\xi_n)$. As the inclusion $\Ss(\Om, \mu_0) \hookrightarrow \Ss(\Om)$ is an isometry and hence has a closed image,  we have $\pb(M) \subset \overline{\Ss(\Om, \mu_0)} = \Ss(\Om, \mu_0)$ by the continuity of $\pb$.
\end{proof}

If the measures  $\pb(\xi)$,  $\xi \in M$,  are dominated by $\mu_0$, then we write
\begin{equation}
\pb (\xi) = p(\xi) \mu_0 \text{   for  some } p(\xi) \in L^1 (\Om, \mu_0).\label{eq:density}
\end{equation}

\begin{definition}\label{def:reg}(\cite[Definition 4.2]{AJLS2016b})	We say that the model  $(M, \Om, \pb)$ has a {\em regular density function} if the density function $p: \Om \times M \to \R$ satisfying (\ref{eq:density}) can be chosen such that for all $v \in T_\xi M$ the partial derivative $\partial_v p(.;\xi)$ exists and lies in $L^1(\Om, \mu_0)$ for some fixed $\mu_0$.
\end{definition}

For a  parametrized measure model $(M, \Om, \pb)$, the differential $d_\xi \pb(v)$ for $v \in T_\xi M$ is dominated by $\pb (\xi)$ \cite[Proposition 2.1]{AJLS2016b}, and we may thus define the {\it logarithmic derivative of $\pb$ at $\xi$ in direction $v$} as
\begin{equation}
\p_ v \log \pb  (\xi) : = \frac{d\{d_\xi \pb(v)\}}{d\pb(\xi)} \in L^1(\Om, \pb(\xi)). \label{eq:log}
\end{equation}

\begin{remark} \label{rem:reg}
The standard notion of a statistical model always assumes that it is dominated by some measure and has a positive regular density function (e.g. \cite[p. 140, p.147]{Borovkov1998}, \cite[p. 23]{BKRW1998},\cite[\S 2.1]{AN2000},  \cite[Definition 2.4]{AJLS2015}). In fact, the definition of a parametrized measure model or statistical model in \cite[Definition 2.4]{AJLS2015} is equivalent to a parametrized measure model or statistical model with a positive regular density function in the sense of Definition \ref{def:reg}.
\end{remark} 
	
If  the model  has a positive  regular  density  function, then we have
\begin{equation}
\p_v \log \pb  (\xi) = \p_v \log p,\label{eq:derlogreg}
\end{equation}
i.e., the logarithmic derivative from (\ref{eq:log}) coincides with the derivative of the logarithm of the density function $p$, justifying the notation from (\ref{eq:log}).

Next  we recall the notion of $k$-integrability introduced in \cite{AJLS2016b}. For this, we define for each $r \in (0, 1]$ the Banach lattice
\begin{equation}
\Ss^r(\Om) := \lim_{\longrightarrow} L^{1/r}(\Om, \mu),
\end{equation}
where the directed limit is taken over the directed set $(\Mm(\Om), \leq)$, where $\mu_1 \leq \mu_2$ if $\mu_2$ dominates $\mu_1$, using the isometric inclusions
\[
\imath_{\mu_2}^{\mu_1}: L^{1/r}(\Om, \mu_1) \longrightarrow L^{1/r}(\Om, \mu_2), \qquad \phi \longmapsto \phi\; \left(\frac{d\mu_1}{d\mu_2}\right)^r.
\]
We denote the element of $\Ss^r(\Om)$ represented by $\phi \in L^{1/r}(\Om, \mu)$ as $\phi \mu^r$,  which allows us to work within $\Ss^r(\Om)$ in a very suggestive way, using the identity
\[
\mu_1^r = \left(\frac{d\mu_1}{d\mu_2}\right)^r \mu_2^r
\]
for $\mu_1 \leq \mu_2$. Since $\imath_{\mu_2}^{\mu_1}$ is an isometry, $\Ss^r(\Om)$ inherits a Banach norm, denoted by $\|.\|_{\Ss^r(\Om)}$, such that the inclusion $L^{1/r}(\Om, \mu_0) \hookrightarrow \Ss^r(\Om)$, $\phi \mapsto \phi \mu^r_0$ becomes an isometry, whose image is denoted by $\Ss^r(\Om, \mu_0)$.

There is a bilinear continuous multiplication map
\[
\cdot:  \Ss^r(\Om) \times \Ss^r(\Om) \longrightarrow \Ss^{r+s}(\Om), \qquad (\phi \mu^r) \cdot (\psi \mu^r) := \phi \psi \mu^{r+s}
\]
for $r, s, r+s \in (0, 1]$. Furthermore, for  $r \in (0, 1]$ and  $0< k \le  1/r$  we define the map
\[
\pi ^k : \Ss^r(\Om) \to  \Ss^{rk} (\Om), \qquad  \phi \cdot \mu ^r  \mapsto {\rm sign}  (\phi) | \phi | ^k \mu ^{rk}.
\]
This map is continuous for all $k$, and it is Fr\'echet-differentiable for $k \geq 1$ with derivative
\[
d_{\mu_r} \pi^k (\eta_r) = k \pi^{k-1} |\mu_r| \cdot \eta_r.
\]

\begin{definition} \label{def:k-integra}
A parametrized measure model $(M, \Om, \pb)$ (statistical model, respectively) is called {\em $k$-integrable } if the map 
\[
\pi^{1/k} \pb =: \pb^{1/k}: M \longrightarrow \Mm^{1/k}(\Om) \subset \Ss^{1/k}(\Om)
\]
is a Frech\'et-$C^1$-map.

Observe that $\pb = \pi^k \pb^{1/k}$, whence the chain rule for Fr\'echet differentiable maps implies that the Frech\'et-derivative of $\pb^{1/k}$ is given as
\begin{equation} \label{eq:formal-derivative}
d_\xi \pb^{1/k}(v) := \frac{1}{k} \partial_v \log \pb(\xi) \; \pb^{1/k}(\xi) \in \Ss^{1/k}(\Om, \pb(\xi)).
\end{equation}
\end{definition}

The reader who is familiar with the references \cite{AJLS2015} and \cite{AJLS2016b} will observe that the definitions of $k$-integrability in those references are different from Definition \ref{def:k-integra}. However, as we shall show now, all these notions are equivalent.

\begin{theorem} \label{thm:k-integr}
Let $(M, \Om, \pb)$ be a parametrized measure model and $k > 1$. Then the following are equivalent.
\begin{enumerate}
\item The model is $k$-integrable.
\item For all $v \in T_\xi M$, $\partial_v \log \pb(\xi) \in L^k(\Om,  \pb (\xi))$, and the map
\[
d\pb^{1/k}: TM \longrightarrow \Ss^{1/k}(\Om)
\]
in (\ref{eq:formal-derivative}) is continuous.
\item For all $v \in T_\xi M$, $\partial_v \log \pb(\xi) \in L^k(\Om,  \pb (\xi))$, and the map
\begin{equation} \label{eq:norm-continuous}
v \longmapsto \|\partial_v \log \pb(\xi)\|_{L^k(\Om,  \pb (\xi))} = \| \partial_v \log \pb(\xi) \; \pb^{1/k}(\xi)\|_{\Ss^{1/k}(\Om)}
\end{equation}
is continuous.
\end{enumerate}
\end{theorem}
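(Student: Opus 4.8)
The plan is to prove the cycle (1)$\Rightarrow$(2)$\Rightarrow$(3)$\Rightarrow$(1), with the last implication being the substantial one. Throughout I would reduce to a fixed dominating measure: for any countable family of parameters (in particular those occurring along a convergent sequence or a line segment) the first observation in the proof of Proposition \ref{prop:mfld-dom} produces a \emph{finite} $\mu_0 \in \Mm(\Om)$ dominating all the relevant measures $\pb(\xi)$, so I may write $\pb(\xi) = p(\xi)\mu_0$ and use the isometry $\Ss^{1/k}(\Om,\mu_0) \cong L^k(\Om,\mu_0)$, $\phi\mu_0^{1/k} \mapsto \phi$. Under this identification the two norms in (\ref{eq:norm-continuous}) coincide, since $\|\partial_v\log\pb(\xi)\,p(\xi)^{1/k}\|_{L^k(\mu_0)}^k = \int_\Om |\partial_v\log\pb(\xi)|^k\,p(\xi)\,d\mu_0 = \|\partial_v\log\pb(\xi)\|_{L^k(\Om,\pb(\xi))}^k$, which I would record first.

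For (1)$\Rightarrow$(2) I would exploit that, although $\pi^{1/k}$ is not differentiable, the map $\pi^k$ \emph{is} Fr\'echet-$C^1$ for $k\ge 1$ (as recorded just before the statement) and $\pb = \pi^k\circ\pb^{1/k}$. The chain rule gives $d_\xi\pb(v) = k\,\pi^{k-1}|\pb^{1/k}(\xi)|\cdot d_\xi\pb^{1/k}(v)$; comparing this with the defining identity $d_\xi\pb(v) = \partial_v\log\pb(\xi)\,\pb(\xi)$ and cancelling $\pb^{(k-1)/k}(\xi)$ on its support forces $d_\xi\pb^{1/k}(v) = \tfrac1k\partial_v\log\pb(\xi)\,\pb^{1/k}(\xi)$. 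This identifies the derivative with (\ref{eq:formal-derivative}) and, since it must lie in $\Ss^{1/k}(\Om,\pb(\xi))$, yields $\partial_v\log\pb(\xi)\in L^k(\Om,\pb(\xi))$; continuity of $d\pb^{1/k}$ is then just continuity of the $C^1$-derivative. The implication (2)$\Rightarrow$(3) is immediate: the map in (\ref{eq:norm-continuous}) is the composition of the continuous map $d\pb^{1/k}$ with the norm of $\Ss^{1/k}(\Om)$.

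The work is in (3)$\Rightarrow$(1), which I would split into continuity of the candidate derivative and then genuine differentiability. For continuity, take $(\xi_n,v_n)\to(\xi,v)$ in $TM$, choose a finite common $\mu_0$, and set $\delta_n := \tfrac1k\partial_{v_n}\log\pb(\xi_n)\,p_n^{1/k}$ and $\delta := \tfrac1k\partial_v\log\pb(\xi)\,p^{1/k}$ in $L^k(\mu_0)$, where $p_n = p(\xi_n)$, $p = p(\xi)$. Since $\pb$ is $C^1$, $p_n\to p$ in $L^1(\mu_0)$ and the $L^1$-densities of $d_{\xi_n}\pb(v_n)$ converge to that of $d_\xi\pb(v)$; along a subsequence $p_n\to p$ and $\partial_{v_n}\log\pb(\xi_n)\to\partial_v\log\pb(\xi)$ a.e.\ on $\{p>0\}$, so $\delta_n\to\delta$ a.e.\ there, while $\delta=0$ on $\{p=0\}$. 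Hypothesis (3) gives $\|\delta_n\|_{L^k}\to\|\delta\|_{L^k}$; combined with Fatou on $\{p>0\}$ this forces $\int_{\{p=0\}}|\delta_n|^k\,d\mu_0\to0$ and $\int_{\{p>0\}}|\delta_n|^k\,d\mu_0\to\|\delta\|_{L^k}^k$, after which the Riesz--Scheff\'e lemma (a.e.\ convergence together with convergence of the $L^k$-norms forces $L^k$-convergence) yields $\delta_n\to\delta$ in $L^k(\mu_0)$. As every subsequence has a further subsequence converging to the same $\delta$, the full sequence converges, so $d\pb^{1/k}$ is continuous on $TM$.

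For differentiability I would fix a segment $\xi_t=\xi+tv$ with a finite common $\mu_0$ and set $p_s := p(\xi_s)$, and establish the fundamental theorem of calculus
\[
\pb^{1/k}(\xi_t)-\pb^{1/k}(\xi) \;=\; \int_0^t \tfrac1k\,\partial_v\log\pb(\xi_s)\,\pb^{1/k}(\xi_s)\,ds,
\]
whose right-hand side is a Bochner integral of an integrand continuous in $s$ by the previous step. Granting this, the directional (G\^ateaux) derivative of $\pb^{1/k}$ exists and equals (\ref{eq:formal-derivative}), and since that candidate is continuous on $TM$, the standard upgrade gives that $\pb^{1/k}$ is Fr\'echet-$C^1$. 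To prove the identity I would test against the point-separating functionals $\phi\mu_0^{1/k}\mapsto\int_\Om \phi\,b\,d\mu_0$, $b\in L^\infty(\Om,\mu_0)$, reducing it by Fubini to the scalar statement $p_t(\om)^{1/k}-p_0(\om)^{1/k}=\int_0^t \tfrac1k\partial_v\log\pb(\xi_s)(\om)\,p_s(\om)^{1/k}\,ds$ for $\mu_0$-a.e.\ $\om$. \textbf{This pointwise identity is the main obstacle}, since $x\mapsto x^{1/k}$ is not Lipschitz at $0$, so absolute continuity of $s\mapsto p_s(\om)$ alone does not suffice. I would resolve it by first extracting, from the $C^1$ representation $p_t=p_0+\int_0^t\dot p_s\,ds$ (with $\dot p_s=\partial_v\log\pb(\xi_s)\,p_s$) together with Fubini, a representative for which $s\mapsto p_s(\om)$ is absolutely continuous for a.e.\ $\om$, and then applying the regularization $(p_s+\eps)^{1/k}$, a Lipschitz function of an absolutely continuous function, to which the classical chain rule applies. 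Letting $\eps\to0$ and dominating by $\tfrac1k p_s^{1/k-1}|\dot p_s|\mathbf{1}_{\{p_s>0\}}$ --- whose joint $s$-$\om$ integrability is exactly furnished by the $L^k$-bound of (3) together with finiteness of $\mu_0$ via H\"older's inequality --- recovers the pointwise identity and closes the argument.
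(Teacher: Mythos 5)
Your proposal is correct, and although its architecture parallels the paper's (easy implications handled identically; the hard direction split into continuity of the candidate derivative followed by a fundamental-theorem-of-calculus argument), both substantive steps are carried out by genuinely different means. For the continuity step (the paper's (3)$\Rightarrow$(2)), the paper normalizes so that $\pb(\xi_0)=\chi_{\Om_0}\mu_0$, proves $L^1$-convergence of $\chi_{\Om_0}q_{n;k}$, upgrades this to weak $L^k$-convergence, splits norms over $\Om_0\dot\cup\Om_1$, and concludes with the Radon--Riesz theorem; you instead pass to a subsequence converging $\mu_0$-a.e., split $\|\delta_n\|_k^k$ over $\{p>0\}$ and $\{p=0\}$ via Fatou, and conclude with the Riesz--Scheff\'e lemma plus the subsequence principle --- a more elementary route that avoids weak topologies altogether. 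For the differentiability step (the paper's (2)$\Rightarrow$(1)), both proofs use an $\eps$-regularization to tame the failure of $x\mapsto x^{1/k}$ to be Lipschitz at $0$, but in different places: the paper shifts the measures, $\pb_\eps=\pb+\eps\mu_0$, derives mean-value-theorem remainder estimates, obtains the tested identity (\ref{eq:int-eq0}) via a scalar fundamental theorem of calculus and dominated convergence, and then uses density plus Hahn--Banach to convert it into the strong remainder bound (\ref{eq:est-r1k}); you shift inside the root, $(p_s+\eps)^{1/k}$, work pointwise in $\om$ with absolutely continuous representatives of $s\mapsto p_s(\om)$ extracted by Fubini, and apply the classical chain rule and dominated convergence to land on the Bochner FTC identity. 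One point you use implicitly and should state: your domination by $\tfrac1k p_s^{1/k-1}|\dot p_s|\mathbf{1}_{\{p_s>0\}}$ is legitimate only because $\dot p_s$ vanishes $\mu_0$-a.e.\ on $\{p_s=0\}$, i.e.\ because $d\pb(\dot\xi_s)$ is dominated by $\pb(\xi_s)$ (the fact underlying (\ref{eq:log})), combined with a Fubini argument to make this hold for a.e.\ $\om$ simultaneously in $s$. As for what each approach buys: the paper's duality route yields the quantitative curve-wise estimate (\ref{eq:est-r1k}) directly, whereas yours yields the G\^ateaux derivative and then invokes the ``standard upgrade'' to Fr\'echet-$C^1$; this last passage is equally delicate in both proofs, since continuity of $d\pb^{1/k}$ on $TM$ is weaker than local operator-norm continuity of $\xi\mapsto d_\xi\pb^{1/k}$, and the paper's final sentence makes essentially the same leap from (\ref{eq:est-r1k}) that you make from your FTC identity, so you are not worse off on this count.
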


\begin{proof}
Evidently, if $\pb^{1/k}$ is Fr\'echet-$C^1$, then its derivative $d\pb^{1/k}$ is continuous by definition, whence the first statement implies the second. Moreover,
\[
\| \partial_v \log \pb(\xi) \; \pb^{1/k}(\xi)\|_{\Ss^{1/k}(\Om)} = k \| d_\xi \pb(v)\|_{\Ss^{1/k}(\Om)},
\]
by (\ref{eq:formal-derivative}), so  evidently, the second statement implies the third. Thus, we have to show the converse.

Suppose that the map (\ref{eq:norm-continuous}) is continuous, let $(v_n)_{n \in \N}$ be a sequence, $v_n \in T_{\xi_n}M$ with $v_n \to v_0 \in T_{\xi_0}M$, and let $\mu_0 \in \Mm(\Om)$ be a measure dominating all $\pb(\xi_n)$, which exists by Proposition \ref{prop:mfld-dom}. Multiplying $\mu_0$ with a positive function in $L^1(\Om, \mu_0)$, we may assume that there is a decomposition $\Om = \Om_0 \dot \cup \Om_1$ such that
\[
\pb(\xi_0) = \chi_{\Om_0} \mu_0. 
\]
Let $p_n, q_n \in L^1(\Om, \mu_0)$ such that $\pb(\xi_n) = p_n \mu_0$ and $d\pb(v_n) = q_n \mu_0$, so that
\begin{equation} \label{eq:q_nk}
d\pb^{1/k}(v_n) = \frac{q_n}{k p_n^{1-1/k}} \chi_{\{p_n > 0\}} \mu_0^{1/k} =: q_{n;k} \mu_0^{1/k}.
\end{equation}
In particular, $p_0 = \chi_{\Om_0}$, and $\|d\pb^{1/k}(v_n)\|_{S^{1/k}(\Om)} = \|q_{n;k}\|_k$, and by the continuity of (\ref{eq:norm-continuous}) it follows that
\begin{equation} \label{eq:k-norm-conv}
\lim \|q_{n;k}\|_k = \|q_{0;k}\|_k,
\end{equation}
where  $\|\cdot\|_k$ denotes the norm in $L^k(\Om, \mu_0)$.
On $\Om_0$ we estimate
\begin{align*}
|q_{n;k} - q_{0;k}| & = \left| \dfrac{q_n}{k p_n^{1-1/k}} - \dfrac{q_0}k\right| \leq \dfrac1k |q_n - q_0| + |q_{n;k}| \left|1 - p_n^{1-1/k}\right|\\
& \leq \dfrac1k |q_n - q_0| + |q_{n;k}| |1 - p_n|^{1-1/k}.
\end{align*}
Thus, since $p_0 = \chi_{\Om_0}$ and $q_0, q_{0;k}$ vanishes on $\Om_1$, we have by H\"older's inequality
\begin{align*}
\|\chi_{\Om_0} q_{n;k} - q_{0;k}\|_1 & \leq \dfrac1k \|q_n - q_0\|_1 + \|q_{n;k}\|_k \|p_n - p_0\|_1^{1-1/k}\\
& = \dfrac1k \|\p_{v_n} \pb - \p_{v_0} \pb\|_1 + \|q_{n;k}\|_k \|\pb(\xi_n) - \pb(\xi_0)\|_1^{1-1/k}.
\end{align*}
Since $\pb$ is a $C^1$-map, both $\|\p_{v_n} \pb - \p_{v_0} \pb\|_1$ and $\|\pb(\xi_n) - \pb(\xi_0)\|_1$ tend to $0$, whereas $\|q_{n;k}\|_k$ is bounded by (\ref{eq:k-norm-conv}). Thus, $\chi_{\Om_0} q_{n;k} \to q_{0;k}$ in $L^1(\Om, \mu_0)$, and as $\|\chi_{\Om_0} q_{n;k}\|_k \leq \|q_{n;k}\|_k$ is bounded, this implies that
\begin{equation} \label{eq:chi0-weak}
\chi_{\Om_0} q_{n;k} \rightharpoonup q_{0;k} \qquad \mbox{in $L^k(\Om, \mu_0)$}.
\end{equation}
This weak convergence implies that
\[
\|q_{0;k}\|_k  \leq \liminf \|\chi_{\Om_0} q_{n;k}\|_k \leq \limsup \|q_{n;k}\|_k \stackrel{(\ref{eq:k-norm-conv})} = \|q_{0;k}\|_k,
\]
so that we have equality in these estimates, and hence,
\[
\lim \|\chi_{\Om_0} q_{n;k}\|_k^k = \lim \|q_{n;k}\|_k^k = \lim \|\chi_{\Om_0} q_{n;k}\|_k^k + \lim \|\chi_{\Om_1} q_{n;k}\|_k^k
\]
which means that $\chi_{\Om_1} q_{n;k} \to 0$ in $L^k(\Om, \mu_0)$. Thus, (\ref{eq:chi0-weak}) now implies that $q_{n;k} \rightharpoonup q_{0;k}$ in $L^k(\Om, \mu_0)$, and by the Radon-Riesz theorem, this together with (\ref{eq:k-norm-conv}) implies that $\|\p_{v_n}\pb^{1/k} - \p_{v_0}\pb^{1/k}\|_{S^{1/k}(\Om, \mu_0)} = \|q_{n;k} - q_{0;k}\|_k \to 0$, i.e., $\lim \p_{v_n}\pb^{1/k} = \p_{v_0}\pb^{1/k}$ in $S^{1/k}(\Om, \mu_0)$ and hence, the continuity of $d\pb^{1/k}$ follows.

Thus, we have shown that the third statement of the theorem implies the second.

Now let us assume that the map $d\pb: TM \to \Ss^{1/k}(M)$ is continuous, and let $\xi: I \to M$ be a curve. By Proposition \ref{prop:mfld-dom}, there is a finite measure $\mu_0$ dominating $\pb(\xi_t)$ for all $t \in I$. In order to be able to divide by powers of our measures, we define $\pb_\eps(\xi) := \pb(\xi) + \eps \mu_0$ for $\eps \geq 0$, so that $(M, \Om, \pb_\eps)$ is again a parametrized measure model, and $\pb = \pb_0$. As before, we define $p_t^\eps = p_t + \eps, q_t^\eps = q_t \in L^1(\Om, \mu_0)$ such that $\pb_\eps(\xi_t) = p_t^\eps \mu_0$ and $d\pb_\eps(\dot \xi_t) = q_t^\eps \mu_0 = q_t \mu_0$, so that
\[
d\pb_\eps^{1/k}(\dot \xi_t) = \frac{q_t}{k (p_t^\eps)^{1-1/k}} \mu_0^{1/k} =: q^\eps_{t;k} \mu_0^{1/k}.
\]
Furthermore, we define for each $l \geq 1$ and $t, t_0 \in I$ the remainder term
\begin{align*}
& r^\eps_{t,t_0;l} := (p^\eps_{t+t_0})^{1/l} - (p^\eps_{t_0})^{1/l} - t q_{t_0;l} \in L^l(\Om, \mu_0)\\
\Rightarrow \qquad  & r^\eps_{t,t_0;l}\; \mu_0^{1/l} = \pb_\eps^{1/l}(\xi_{t+t_0}) - \pb_\eps^{1/l}(\xi_{t_0}) - t d\pb_\eps^{1/l}(\dot \xi_{t_0}).
\end{align*}

For $\eps > 0$, by the mean value theorem, there is an $\eta_t$ between $p_{t+t_0}^\eps$ and $p_{t_0}^\eps$ (and hence, $\eta_t \geq \eps$) for which
\begin{align*}
|r_{t,t_0;k}^\eps| &= \left|(p_{t+t_0}^\eps)^{1/k} - (p_{t_0}^\eps)^{1/k} - t q_{t_0;k}^\eps\right| = \left|\frac{p_{t+t_0} - p_{t_0}}{k \eta_t^{1-1/k}} - t q_{t_0;k}^\eps\right|\\
&\leq \frac{|r^0_{t,t_0;1}|}{k \eta_t^{1-1/k}} + |t| \left| \frac{q_{t_0}}{k \eta_t^{1-1/k}} - q_{t_0;k}^\eps\right|\\
& = \frac{|r^0_{t,t_0;1}|}{k \eta_t^{1-1/k}} + |t| |q_{t_0;k}^\eps| \frac{|(p_{t_0}^\eps)^{1-1/k} - \eta_t^{1-1/k}|}{\eta_t^{1-1/k}}\\
& \leq \frac1{k \eps^{1-1/k}} \left(|r^0_{t,t_0;1}| + k |t| |q_{t_0;k}^\eps| |(p_{t_0}^\eps) - \eta_t|^{1-1/k}\right)\\
& \leq \frac1{k \eps^{1-1/k}} \left(|r^0_{t,t_0;1}| + k |t| |q_{t_0;k}| |p_{t_0} - p_{t+t_0}|^{1-1/k}\right).
\end{align*}
Integration and H\"older's inequality yields
\[
\|r_{t,t_0;k}^\eps\|_1 \leq \frac1{k \eps^{1-1/k}} \left(\|r^0_{t,t_0;1}\|_1 + k |t| \|q_{t_0;k}\|_k \|p_{t_0} - p_{t+t_0}\|_1^{1-1/k}\right).
\]
Since $t^{-1} \|r^0_{t,t_0;1}\|_1 = t^{-1} \|\pb(\xi_{t+t_0}) - \pb(\xi_{t_0}) - t d\pb(\dot \xi_{t_0})\|_{\Ss(\Om)} \xrightarrow{t\to0}0$ and $\|p_{t_0} - p_{t+t_0}\|_1 = \|\pb(\xi_{t+t_0}) - \pb(\xi_{t_0})\|_{\Ss(\Om)}\xrightarrow{t\to0}0$, as $\pb$ is Fr\'echet differentiabe, it follows that for $\eps > 0$,
\begin{equation} \label{eq:remainder-L1}
\lim_{t \to 0} \frac1t \|r_{t,t_0;k}^\eps\|_1 = 0.
\end{equation}
Moreover, we can make the following estimate:
\begin{align*}
|q_{t;k}^\eps - q_{t_0;k}^\eps| &= \left| \frac{q_t}{k(p_t^\eps)^{1-1/k}} - \frac{q_{t_0}}{k(p_{t_0}^\eps)^{1-1/k}}\right|\\
& \leq \frac1{k(p_{t_0}^\eps)^{1-1/k}} |q_t - q_{t_0}| + \left| \frac1{k(p_t^\eps)^{1-1/k}} - \frac1{k(p_{t_0}^\eps)^{1-1/k}}\right|\; |q_t|\\
& \leq \frac1{k(p_{t_0}^\eps)^{1-1/k}} \left(|q_t - q_{t_0}| + k \left| (p_{t_0}^\eps)^{1-1/k} - (p_t^\eps)^{1-1/k}\right|\; |q^\eps_{t;k}|\right)\\
& \leq \frac1{k \eps^{1-1/k}} \left(|q_t - q_{t_0}| + k |p_{t_0} - p_t|^{1-1/k}\; |q_{t;k}|\right)
\end{align*}
Integration and H\"older's inequality implies
\begin{align*}
\|q_{t;k}^\eps - q_{t_0;k}^\eps\|_1 &\leq \frac1{k \eps^{1-1/k}} \left(\|q_t - q_{t_0}\|_1 + k \|p_{t_0} - p_t\|_1^{1-1/k}\; \|q_{t;k}\|_k\right),
\end{align*}
and since both $\|p_t - p_{t_0}\|_1 = \|\pb(\xi_t) - \pb(\xi_{t_0}\|_{\Ss(\Om)}$ and $\|q_t - q_{t_0}\|_1 = \|d\pb(\dot \xi_t) - d\pb(\dot\xi_{t_0})\|_{\Ss(\Om)}$ tend to $0$ for $t \to t_0$ as $\pb$ is a $C^1$-map, it follows that
\begin{equation} \label{eq:dp-contL1}
\lim_{t \to t_0} \|q_{t;k}^\eps - q_{t_0;k}^\eps\|_1 = 0.
\end{equation}

For $f \in L^\infty(\Om)$ consider the function $\tilde f: I \to \R$, 
\[
\tilde f(t) := \int_\Om (p_t^\eps)^{1/k} f\; d\mu_0.
\]
Then (\ref{eq:remainder-L1}) implies that
\[
\tilde f'(t) = \int_\Om q_{t;k}^\eps f\; d\mu_0,
\]
and (\ref{eq:dp-contL1}) implies that $\tilde f'$ is continuous, so that by the fundamental theorem of calculus we have for all $t_0, t_1 \in I$
\begin{equation} \label{eq:int-eq}
\tilde f(t_1) - \tilde f(t_0) = \int_\Om ((p_{t_1}^\eps)^{1/k} -(p_{t_0}^\eps)^{1/k}) f\; d\mu_0 = \int_{t_0}^{t_1} \int_\Om q_{s;k}^\eps f\; d\mu_0\; ds.
\end{equation}
Since $|(p_{t_1}^\eps)^{1/k} -(p_{t_0}^\eps)^{1/k}| \leq |p_{t_1} -p_{t_0}|^{1/k}$ and $|q_{s;k}^\eps| \leq |q_{s;k}|$, we may apply the dominated convergence theorem to (\ref{eq:int-eq}) to conclude that
\begin{equation} \label{eq:int-eq0}
\int_\Om (p_{t_1}^{1/k} - p_{t_0}^{1/k}) f\; d\mu_0 = \int_{t_0}^{t_1} \int_\Om q_{s;k} f\; d\mu_0\; ds
\end{equation}
for any $f \in L^\infty(\Om)$. Now both sides of (\ref{eq:int-eq0}) may be regarded as bounded linear functionals for $f \in L^{k/(k-1)}(\Om, \mu_0)$, since $\|p_{t_1}^{1/k} - p_{t_0}^{1/k}\|_k \leq \|p_{t_1} - p_{t_0}\|_1^{1/k} < \infty$ and $\|q_{s;k}\|_k = \|d\pb(\dot \xi_s)\|_{\Ss^{1/k}(\Om)}$ depends continuously on $s$ by the continuity of $d\pb$. Therefore, since $L^\infty(\Om) \subset L^{k/(k-1)}$ is dense, it follows that (\ref{eq:int-eq0}) holds for all $f \in L^{k/(k-1)}(\Om)$. Thus, for all such $f$ we have
\begin{align*}
\left|\int_\Om r_{t,t_0;k} f\; d\mu_0\right| &= \left|\int_\Om (p_{t+t_0}^{1/k} - p_{t_0}^{1/k} - t q_{t_0;k}) f\; d\mu_0\right|\\
&\stackrel{(\ref{eq:int-eq0})} = \left|\int_{t_0}^{t_0+t} \int_\Om (q_{s;k} - q_{t_0;k}) f\; d\mu_0\; ds\right|\\
& \leq \left|\int_{t_0}^{t_0+t} \|q_{s;k} - q_{t_0;k}\|_k \|f\|_{k/(k-1)}\; ds\right|\\
& \leq |t|  \|f\|_{k/(k-1)} \sup_{|s-t_0| \leq t} \|q_{s;k} - q_{t_0;k}\|_k.
\end{align*}
Now by the Hahn-Banach theorem, we may choose $f \in L^{k/(k-1)}(\Om, \mu_0)$ such that $\int_\Om r_{t,t_0;k} f\; d\mu_0 = \|r_{t,t_0;k}\|_k$ and $\|f\|_{k/(k-1)} = 1$. Then we conclude from this estimate
\[
\|r_{t,t_0;k}\|_k \leq |t| \sup_{|s-t_0| \leq t} \|q_{s;k} - q_{t_0;k}\|_k,
\]
which translates into
\begin{align} \label{eq:est-r1k}
\| \pb(\xi_{t_0+t})^{1/k} - \pb(\xi_{t_0})^{1/k} &- t d\pb^{1/k}(\dot \xi_{t_0}) \|_{\Ss^{1/k}(\Om)}\\
\nonumber & \leq |t| \sup_{|s-t_0| \leq t} \|d\pb^{1/k}(\dot \xi_s) - d\pb^{1/k}(\dot \xi_{t_0})\|_{\Ss^{1/k}(\Om)}
\end{align}
for any curve $(\xi_t)$ in $M$, and this together with the continuity of $d\pb^{1/k}$ implies that $\pb^{1/k}$ is Fr\'echet differentiable. That is, the second statement in Theorem \ref{thm:k-integr} implies the first.
\end{proof}

\begin{remark}\label{rem:fisher} The Fisher metric $\g$ on  a  parametrized measure model $(M, \Om, \pb)$ is defined by
\begin{equation}
\g_\xi(v, w) : = \la \p _v  \log \pb ; \p _w \log \pb \ra_{L^2 (\Om , \pb (\xi))} = \la d\pb^{1/2}(v); d\pb^{1/2}(w)\ra_{\Ss^{1/2}(\Om)}.\label{eq:fisher}
\end{equation}
Thus  the Fisher  metric is well-defined and continuous iff  $(M, \Om, \pb)$ is 2-integrable.
\end{remark}
\subsection{Essential tangent space  and  reduced Fisher metric}\label{subs:reduced}
Let $(M, \Om, \pb)$ be a 2-integrable  parametrized  measure model.    Formula  (\ref{eq:fisher}) shows that  the kernel of the Fisher
metric $\g$  at $ \xi \in  M$ coincides  with the kernel of  the map $\Lambda_\xi : T_\xi  M \to  L^2(\Om, \pb (\xi)), \:  v \mapsto  \p_v (\log  \pb )$.
In other  words, the degeneracy  of the Fisher metric $\g$ is caused  by the non-effectiveness of the parametrization of  the family  $\pb (\xi)$  by  the map $\pb$.
The tangent   cone $T_{\pb (\xi)} \pb (M)$  of the image  $\pb (M) \subset   \Ss (\Om)$  is   isomorphic to the  quotient  $ T_\xi  M / \ker \Lambda _\xi$.  This motivates  the following 
\begin{definition}\label{def:ess}
	The   quotient $\hat  T_\xi M: =  T_\xi  M / \ker \Lambda _\xi$  will be called the {\it   essential tangent  space  of $M$   at $\xi$}.
\end{definition}
Clearly, the  Fisher metric   $\g$ descends
to   a non-degenerated   metric  $\hat \g$  on $\hat  T_\xi M$, which we shall  call the {\it  reduced  Fisher metric}.  
Denote  by $\hat  T^{\hat \g}M$ the fiberwise completion of $\hat TM$ w.r.t.  the reduced Fisher metric $\hat\g$. Its  inverse   $ \hat\g ^{-1}$  is a well-defined   quadratic   form on the fibers of  the dual bundle   
$\hat T^{*, \hat \g ^{-1}}M$
which  we can therefore identify with  $\hat T^{\hat \g}M$.

\begin{remark}\label{rem:completion}  The  fiberwise completion  $\hat T^{\hat \g}M$  is different   from  $\hat TM$  only if  $M$ is  infinite dimensional. Observe that the map $\hat T^{\hat \g}M \to M$ is not a fiber bundle in general, as we do not define a topology on the total space $\hat T^{\hat \g}M$. Nevertheless, we shall call the left inverses of this map {\em sections of $\hat T^{\hat\g}M$}.
\end{remark}

\begin{example}\label{ex:normalm}  One  of  the typical  singular   statistical models   considered  in \cite[Example 1.2, p. 14]{Watanabe2009} is      the  normal  mixture  family $(W, \R,  dx, p)$
	where
	$$W =\{ ( a, b) \in \R^2 |\,  a \in [0,1],  b \in \R \} $$
	$$ p ( x|  a, b ) : = \frac{(1 -a)  e^{ - x^2 /2}  + a   e ^{ - (x -b) ^2 /2}}{\sqrt {2\pi}} .$$
	This  family is a typical  example   of  Gaussian mixture models  which comprise  also  the changing time  model  (the Nile River model) and the ARMA model  in time series \cite[\S 12.2.6, p. 311]{Amari2016}.  We compute
	$$\p _a p (x | a, b) = \frac{-  e^{ - x^2 /2}  +  e ^{ - (x -b) ^2 /2}}{\sqrt {2\pi}} , $$
	$$  \p _b p (x | a, b) = \frac{ a (x-b)  e ^{ - (x -b) ^2 /2}}{\sqrt {2\pi}} . $$
	Hence  $ \p _a  p(x |  a, b) = 0 \, \forall x $ iff  $b = 0$   and  $\p _b p (x|  a, b)  = \, \forall x$ iff  $ a = 0$.  Furthermore  it is not hard to see  that  
	$(\p_a  p (x|   a, b)$ and $ \p _b  p (x| a, b)) $ are linearly independent. Thus  the   singularity  of   $(W, \R, dx, p)$ is $\{  a = 0\} \cup \{  b = 0\}$.
	Furthermore  $\hat T_{(0, 0)} W = \{ pt\}$,   $\hat T_{(a, 0)}W = \R^2 / (\R, 0)$  for $ a\not = 0$, $\hat T_{(0, b)} W  = \R^2 /(0, \R)$  for $b \not = 0$.
\end{example}

\section{Visible  functions, their    generalized gradient and pre-gradient}\label{sec:vis}
Motivated by  problems of  parameter estimation  in mathematical statistics and machine learning,  we introduce the notion of  a {\it regular function on $\Om$} (Definition \ref{def:regular-k}),  a visible function  on $M$  (Definition  \ref{def:obs})
and its   generalized gradient and pre-gradient (Definitions \ref{def:grad}, \ref{def:pregrad}). Our main results in this section are  Propositions \ref{prop:change-diff-int}, \ref{prop:cov}. The first one  asserts the validity of differentiation under integral sign, which is important  for the proof of  the second one  that
asserts the existence of the pre-gradient  of  functions  associated  to  {\it $\varphi$-regular} parameter estimators   in  statistical inference.  

Finally, in Subsection \ref{subs:finite} we apply the obtained results  to the   parametrized measure model of all  measures (resp. probability measures) on a finite sample  space.

\subsection{Visible  functions  and  estimators}\label{subs:obs}

Given an parametrized measure model $(M, \Om, \pb)$, we  set  for $k \geq 1$
$$L^k_M (\Om): = \{\varphi: \Om \to \R \mid   \varphi\in  L^k  (\Om, \pb (\xi)) \, \text{ for  all  }  \xi \in M\}.$$  

For $\varphi \in L^k_M (\Om)$ we obtain a map $\varphi \pb^{1/k}: M \to \Ss^{1/k}(\Om)$, $\xi \mapsto \varphi \pb(\xi)^{1/k}$. In general, we cannot expect $\varphi \pb^{1/k}$ to be differentiable, not even continuous, as the following example illustrates.

\begin{example}
Let $\Om := (-1,1)$ and let $h: \R \to \R$ be a $C^\infty$-function with $h(x) > 0$ for $x \in (0,1)$ and $h(x) = 0$ for $x \notin(0,1)$, and such that $\int_\R h(x)\; dx = 1$. Let $\alpha > 1$ and $\beta > 0$ be fixed, and define the family $(\pb(t))_{t \in (-1,1)}$ on $\Om$ by
\begin{align*}
\pb_t &= \Big((1 - |t|^{\alpha+1}) \chi_{(-1,0)} + |t|^\alpha h(|t|^{-1} x) \chi_{(0, 1)} \Big)\; dx, \; t \neq 0,\\
\pb_0 &= \chi_{(-1,0)}\; dx.
\end{align*}
The density function on $(-1,0)$ is chosen such that $\pb_t$ is a probability measure on $\Om$ for all $t$. Then $d\pb_0 = 0$, and for $t \neq 0$,
\[
d\pb_t = \sgn(t) \Big(-(\alpha + 1) |t|^\alpha\chi_{(-1,0)} + |t|^{\alpha-1} g(|t|^{-1} x) \chi_{(0,1)}\Big)\; dx,
\]
where
\[
g(x) := \alpha h(x) - x h'(x),
\]
and it is straightforward to see that $\|d\pb_t - d\pb_{t_0}\|_1 \to 0$ as $t \to t_0$, so that $\pb$ is a parametrized measure model.

Observe that for any $l \geq 1$
\begin{align*}
l^l \|\p_t \pb^{1/l}\|_{\Ss^{1/l}(\Om)}^l & \stackrel{(\ref{eq:formal-derivative})}= \left\|\frac{d\pb_t}{\pb_t^{1-1/l}}\right\|_{L^l(\Om, dx)}^l\\
& = \int_{-1}^0 \frac{((\alpha + 1) |t|^\alpha)^l}{(1 - |t|^{\alpha+1})^{l-1}}\; dx + \int_0^1 \frac{(|t|^{\alpha-1} g(|t|^{-1} x))^l}{(|t|^\alpha h(|t|^{-1} x))^{l-1}}\; dx\\
& = \frac{(\alpha + 1)^l |t|^{l \alpha}}{(1 - |t|^{\alpha+1})^{l-1}} + |t|^{\alpha + 1 - l} \int_0^1 \frac{g(u)^l}{h(u)^{l-1}}\; du,
\end{align*}
using the substitution $u = t^{-1} x$. Observe that
\[
\left|\int_0^1 \frac{g(u)^l}{h(u)^{l-1}}\; du\right|^{1/l} = \|\alpha h^{1/l} - l u (h^{1/l})'\|_{L^l(\Om, dx)} < \infty,
\]
since $h^{1/l}$ is smooth as $h$ vanishes to infinite order at $u = 0$. Thus, if $l < \alpha + 1$, then $\|\p_t \pb^{1/l}\|_{\Ss^{1/l}(\Om)}$ depends continuously on $t$ and therefore, by Theorem \ref{thm:k-integr}, $\pb$ is $l$-integrable for all $l < \alpha + 1$.

Now let $\varphi(x) := \chi_{(0, 1)} x^{-\beta}$. Then for any $k > 1$, $\|\varphi\|_{L^k(\Om, \pb_0)} = 0$, and for $t \neq 0$ we have
\begin{align*}
\|\varphi\|_{L^k(\Om, \pb_t)}^k & = \int_0^1 x^{-k\beta} |t|^\alpha h(|t|^{-1} x)\; dx\\
& = |t|^{\alpha + 1 - k \beta} \int_0^1 u^{-k \beta} h(u)\; du < \infty,
\end{align*}
and therefore,
\[
\varphi \in L^k_{(-1,1)}(\Om) \quad \mbox{for all $k \geq 1$}.
\]
On the other hand,
\[
\E_{\pb_t}(\varphi) = \int_0^1 x^{-\beta} |t|^\alpha h(|t|^{-1} x)\; dx = |t|^{\alpha + 1 - \beta} \int_0^1 u^{-\beta} h(u)\; du,
\]
so that for $\beta > \alpha + 1$ we have $\lim_{t \to 0} \E_{\pb_t}(\varphi) = \infty$.

That is, for a given $l > 1$ choosing the parameters such that $\beta > \alpha + 1 > l$, $((-1,1), \Om, \pb)$ is an $l$-integrable model, $\varphi \in L^k_{(-1,1)}(\Om)$ for all $k \geq 1$, but the function $t \mapsto \E_{\pb_t}(\varphi)$ is discontinuous.
\end{example}

Observe that the failure of the map $t \mapsto \E_{\pb_t}(\varphi)$ in the preceding example to be continuous at $t = 0$ is due to the unboundedness of the map $t \mapsto \|\varphi\|_{L^k(\Om, \pb(t))}$. This motivates the following definition.

\begin{definition} \label{def:regular-k}
Let $(M, \Om, \pb)$ be a parametrized measure model. We call a function $\varphi$ on $\Om$ {\em $k$-regular}, if $\varphi\in L^k_M(\Om)$ and moreover if the function $\xi \mapsto \|\varphi\|_{L^k(\Om, \pb(\xi))}$ is locally bounded, i.e. if for all $\xi_0 \in M$
\[
\limsup_{\xi \to \xi_0} \|\varphi\|_{L^k(\Om, \pb(\xi))} < \infty.
\]
\end{definition}
If there is no danger of confusion, we shall call a $k$-regular function $\varphi$ simply a   
{\it regular  function}.

\begin{proposition} \label{prop:change-diff-int}
	Let $k, k' > 1$ be dual indices, i.e. $k^{-1} + {k'}^{-1} = 1$, and let $(M, \Om, \pb)$ be an $k'$-integrable parametrized measure model. If $\varphi \in L^k_M (\Om)$ is regular, then the map
	\begin{equation} \label{eq:phi-p}
	M \longrightarrow \R, \qquad \xi \longmapsto \E_{\pb(\xi)}(\varphi) = \int_\Om \varphi\; d\pb(\xi)
	\end{equation}
	is Gat\^eaux-differentiable, and for $X \in TM$ the G\^ateaux-derivative is
	\begin{equation} \label{eq:phi-p-deriv}
	\p_X \E_{\pb(\xi)}(\varphi) = \E_{\pb(\xi)}(\varphi \; \p_X \log\pb(\xi)) = \int_\Om \varphi \; \p_X \log \pb(\xi)\; d\pb(\xi).
	\end{equation}
\end{proposition}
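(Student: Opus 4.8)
The plan is to reduce everything to differentiation under the integral sign at the level of densities, and to exploit the factorization $\pb = \pb^{1/k}\cdot\pb^{1/k'}$ together with H\"older's inequality, the point being that $\varphi \in L^k$ must be paired against the $L^{k'}$-type derivative supplied by $k'$-integrability. Fix $\xi_0 \in M$, $X \in T_{\xi_0}M$, and a curve $\xi_t$ with $\xi(0)=\xi_0$, $\dot\xi(0)=X$; since the claimed derivative depends only on $(\xi_0,X)$, it suffices to compute $\frac{d}{dt}\big|_{0}\E_{\pb(\xi_t)}(\varphi)$ along such a curve. By Proposition~\ref{prop:mfld-dom} (applied to a countable dense set of parameter values together with the continuity of $\pb$) I choose $\mu_0 \in \Mm(\Om)$ dominating all $\pb(\xi_t)$ for $t$ near $0$, and write $\pb(\xi_t) = p_t \mu_0$, $d\pb(\dot\xi_t) = q_t \mu_0$, and $s_t := p_t^{1/k'}$, so that $\E_{\pb(\xi_t)}(\varphi) = \int_\Om \varphi p_t\, d\mu_0$. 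Because $\pb$ is $C^1$ one has $t^{-1}(p_t - p_0) \to q_0$ in $L^1(\Om,\mu_0)$, and because the model is $k'$-integrable, $\pb^{1/k'}$ is $C^1$, which in density form reads $t^{-1}(s_t - s_0) \to w_0$ in $L^{k'}(\Om,\mu_0)$, where $w_0 = \frac1{k'} q_0\, p_0^{-1/k}\chi_{\{p_0>0\}}$ is the density of $d\pb^{1/k'}(X)$ by (\ref{eq:formal-derivative}).

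The target identity $\frac{d}{dt}\big|_{0}\int \varphi p_t\, d\mu_0 = \int \varphi q_0\, d\mu_0 = \int \varphi\, \p_X \log \pb(\xi_0)\, d\pb(\xi_0)$ is exactly differentiation under the integral sign, but since $\varphi$ is unbounded the naive dominated-convergence argument (which would use only $t^{-1}(p_t - p_0) \to q_0$ in $L^1$) fails. To bring in $\pb^{1/k'}$ I would apply the mean value theorem to $s \mapsto s^{k'}$ pointwise, writing
\[
p_t - p_0 = s_t^{k'} - s_0^{k'} = k'\, \theta_t^{k'-1}(s_t - s_0),
\]
with $0 \le \theta_t(x)$ between $s_t(x)$ and $s_0(x)$, so the difference quotient becomes $\varphi\, t^{-1}(p_t - p_0) = k'\,(\varphi\theta_t^{k'-1})\cdot t^{-1}(s_t - s_0)$. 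Here $t^{-1}(s_t - s_0) \to w_0$ in $L^{k'}$, while the companion factor $\varphi \theta_t^{k'-1}$ is controlled in $L^k$: from $0 \le \theta_t \le s_t + s_0$ one gets $\|\varphi \theta_t^{k'-1}\|_k \le 2^{k'-1}\big(\|\varphi\|_{L^k(\Om,\pb(\xi_t))} + \|\varphi\|_{L^k(\Om,\pb(\xi_0))}\big)$, using $s_t^{k'-1} = p_t^{1/k}$ and the isometry $\|\varphi p_t^{1/k}\|_k = \|\varphi\|_{L^k(\Om,\pb(\xi_t))}$, and this is \emph{locally bounded} precisely by the $k$-regularity of $\varphi$. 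One also checks that the intended limit $k'\int \varphi s_0^{k'-1} w_0\, d\mu_0$ equals $\int \varphi q_0\, d\mu_0$, since $s_0^{k'-1} = p_0^{1/k}$ and $k'\, p_0^{1/k} w_0 = q_0$ on $\{p_0>0\}$, while both sides vanish on $\{p_0=0\}$ as $q_0$ and $w_0$ are dominated by $p_0$.

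It then remains to pass to the limit in the product, which is the main obstacle. I would split
\[
\int \varphi\theta_t^{k'-1}\tfrac{s_t - s_0}{t}\, d\mu_0 - \int \varphi s_0^{k'-1} w_0\, d\mu_0 = \int \varphi\theta_t^{k'-1}\Big(\tfrac{s_t - s_0}{t} - w_0\Big) d\mu_0 + \int \big(\varphi\theta_t^{k'-1} - \varphi s_0^{k'-1}\big) w_0\, d\mu_0.
\]
The first integral tends to $0$ by H\"older, since $\varphi\theta_t^{k'-1}$ is bounded in $L^k$ and $t^{-1}(s_t - s_0) - w_0 \to 0$ in $L^{k'}$. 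For the second, I argue along an arbitrary sequence $t_n \to 0$: passing to a subsequence along which $s_{t_n} \to s_0$ $\mu_0$-a.e., the squeezed quantities satisfy $\theta_{t_n} \to s_0$ a.e., hence $g_n := \varphi(\theta_{t_n}^{k'-1} - s_0^{k'-1}) \to 0$ a.e. while staying bounded in $L^k$. A bounded sequence in $L^k$ ($1<k<\infty$) that converges a.e. to $0$ converges weakly to $0$, so $\int g_n w_0\, d\mu_0 \to 0$ since $w_0 \in L^{k'} = (L^k)^*$ is fixed (alternatively, the family $\{g_n w_0\}$ is uniformly integrable by H\"older and the absolute continuity of $w_0 \in L^{k'}$, and one invokes the Vitali convergence theorem). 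As every sequence $t_n \to 0$ admits such a subsequence with limit $0$, the full limit as $t \to 0$ is $0$, which yields $\frac{d}{dt}\big|_{0}\E_{\pb(\xi_t)}(\varphi) = \int \varphi q_0\, d\mu_0$ and identifies the G\^ateaux derivative with the stated expression. The difficulty throughout is the unboundedness of $\varphi$: it forces us to trade the available $L^1$-differentiability of $\pb$ for the $L^{k'}$-differentiability of $\pb^{1/k'}$ through the mean-value factorization, and to use $k$-regularity exactly to keep the companion factor bounded in the dual space $L^k$.
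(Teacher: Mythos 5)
Your proof is correct, but it follows a genuinely different route from the paper's. The paper proceeds in two stages: it first shows that $t \mapsto \int_\Om \varphi\,\p_t\log\pb\;d\pb(\xi_t)$ is continuous at $t=0$ (via the weak convergence $\varphi p_t^{1/k} \rightharpoonup \varphi p_0^{1/k}$ in $L^k(\Om,\mu_0)$, obtained from H\"older and the regularity bound, paired against $q_{t;k'} \to q_{0;k'}$ in $L^{k'}$), and then establishes the fundamental-theorem-of-calculus identity (\ref{eq:int-difference}) --- first for bounded $\varphi$ using the identity (\ref{eq:int-eq0}) proved inside Theorem \ref{thm:k-integr}, then for general $\varphi$ by monotone approximation --- and finally differentiates it. You instead attack the difference quotient directly, via the pointwise mean-value factorization $t^{-1}(p_t-p_0) = k'\theta_t^{k'-1}\,t^{-1}(s_t-s_0)$, pairing the $L^k$-bounded factor $\varphi\theta_t^{k'-1}$ (bounded exactly by $k$-regularity) against the $L^{k'}$-convergent difference quotient of $\pb^{1/k'}$ (exactly $k'$-integrability), and dispatching the remaining term with an a.e.-subsequence plus weak-convergence (or Vitali) argument. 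The two proofs use regularity and $k'$-integrability in the same dual roles, but yours avoids both the FTC/monotone-approximation machinery and any reliance on (\ref{eq:int-eq0}), so it is self-contained given the statement of $k'$-integrability alone; in fact you only need differentiability of $\pb^{1/k'}$ at the base point, not continuity of its differential. The paper's route yields slightly more as a byproduct, namely continuity of the derivative along the curve. Two small points you should make explicit in a final write-up: $\theta_t$ must be chosen measurably, e.g.\ $\theta_t := \bigl((s_t^{k'}-s_0^{k'})/(k'(s_t-s_0))\bigr)^{1/(k'-1)}$ on $\{s_t \neq s_0\}$ and $\theta_t := s_0$ elsewhere, which works since $u \mapsto u^{k'-1}$ is strictly increasing; and the lemma that a sequence bounded in $L^k$ and converging a.e.\ to $0$ converges weakly to $0$ requires $1<k<\infty$ and a finite (or $\sigma$-finite) measure, both available here since $\mu_0$ is finite.
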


\begin{proof} Let $X \in T_{\xi_0}M$, and let $\xi_t$ be a differentiable curve in $M$ with $\dot \xi_0 = X$. By Proposition \ref{prop:mfld-dom}, there is a measure $\mu_0 \in \Mm(\Om)$ which dominates all $\pb(\xi_t)$. In fact, when replacing $\mu_0$ by $(\max\{|\varphi|^{k'}, 1\})^{-1} \mu_0$, we may assume w.l.o.g. that in addition $\varphi \in L^{k'}(\Om, \mu_0)$.
	
	As in the proof of Theorem \ref{thm:k-integr}, we define the functions $p_t, q_t \in L^1(\Om, \mu_0)$ such that $\pb(\xi_t) = p_t \mu_0$ and $d\pb(\dot \xi_t) = \p_t \pb(\xi_t) = q_t \mu_0$.  Also,   let $\|\cdot\|_r$ denote the norm in $L^r(\Om, \mu_0)$. Then by H\"older's inequality
	\begin{align}
	\label{eq:est-varphi1}
	\|\varphi (p_t^{1/k} - p_0^{1/k})\|_1 & \leq \|\varphi\|_{k'} \|p_t^{1/k} - p_0^{1/k}\|_k \leq \|\varphi\|_{k'} \|(p_t - p_0)^{1/k}\|_k\\
	\nonumber
	& = \|\varphi\|_{k'} \|p_t - p_0\|_1^{1/k} \xrightarrow{t \to 0} 0
	\end{align}
	as $\|p_t - p_0\|_1 = \|\pb(\xi_t) - \pb(\xi_0)\|_{\Ss(\Om)} \to 0$. Furthermore, 
	\[\limsup_{t \to 0} \|\varphi p_t^{1/k}\|_k = \limsup_{t \to 0} \|\varphi\|_{L^k(\Om, \pb(\xi_t))} < \infty\]
	by the regularity of $\varphi$, which together with (\ref{eq:est-varphi1}) implies that $\varphi p_t^{1/k} \rightharpoonup \varphi p_0^{1/k}$ in $L^k(\Om, \mu_0)$ and therefore,
	\begin{equation} \label{eq:conv-1}
	\la \varphi p_t^{1/k} - \varphi p_0^{1/k}; q_{0;k'} \ra \xrightarrow {t \to 0} 0,
	\end{equation}
	where $\la f; g\ra := \E_{\mu_0}(fg)$ stands for the canonical dual pairing of $L^k(\Om, \mu_0)$ and $L^{k'}(\Om, \mu_0)$, and where we define
	\[
	q_{t;k'} := \frac{q_t}{k'\; p_t^{1/k}} \chi_{\{p_t > 0\}} \in L^{k'}(\Om, \mu_0), \quad \mbox{so that $d\pb^{1/k'}(\dot \xi_t) = q_{t;k'} \mu_0^{1/k'}$}
	\]
	analogously to (\ref{eq:q_nk}). Furthermore, again by H\"older's inequality,
	\begin{equation} \label{eq:conv-2}
	\la \varphi p_t^{1/k}; q_{t;k'} - q_{0;k'}\ra \leq \|\varphi p_t^{1/k}\|_k \|q_{t;k'} - q_{0;k'}\|_{k'} \xrightarrow{t\to 0} 0,
	\end{equation}
	since 
	\[
	\|q_{t;k'} - q_{0;k'}\|_{k'} = \|d\pb^{1/k'}(\dot \xi_t) - d\pb^{1/k'}(\dot \xi_0)\|_{\Ss^{1/k'}(\Om)} \to 0
	\]
	by the $k'$-integrability of $\pb$ and hence the continuity of $d\pb^{1/k'}$, and since $\|\varphi p_t^{1/k}\|_k = \|\varphi\|_{L^k(\Om, \pb(\xi_t))}$ is bounded by the regularity of $\varphi$.
	
	From (\ref{eq:conv-1}) and (\ref{eq:conv-2}) we now obtain
	\begin{align*}
	\la \varphi p_t^{1/k}; q_{t;k'} \ra - \la \varphi p_0^{1/k}; q_{0;k'} \ra = & \la \varphi p_t^{1/k}; q_{t;k'} - q_{0;k'}\ra \\ & \quad + \la \varphi p_t^{1/k} - \varphi p_0^{1/k}; q_{0;k'} \ra \xrightarrow{t\to 0} 0,
	\end{align*}
	and therefore from the definition of the dual pairing $\la \cdot;\cdot\ra$ and of $q_{t;k'}$, and as $q_t\mu_0 = \p_t \log \pb\; \pb(t)$, we conclude
	\begin{equation} \label{eq:F-limit}
	\lim_{t \to 0} \int_\Om \varphi\; \p_t \log \pb\; \pb(t) = \int_\Om \varphi\; \p_X \log \pb\; d\pb(\xi_0).
	\end{equation}
	Also observe that
	\begin{equation} \label{eq:int-difference}
	\int_\Om \varphi d\pb(\xi_t) - \int_\Om \varphi d\pb(\xi_0) = \int_0^t \int_\Om \varphi \p_t \log \pb|_{t=s}\; d\pb(s)\; ds.
	\end{equation}
	Indeed, (\ref{eq:int-difference}) holds if $\varphi \in L^\infty(\Om)$ is bounded, using (\ref{eq:int-eq0}) for $k = 1$, and an arbitrary $\varphi \in L^k_M(\Om)$ can be monotonically approximated by bounded functions, so that (\ref{eq:int-difference}) follows from the monotone convergence theorem. Thus,
	\begin{align*}
	\left. \frac d{dt}\right|_{t=0} \int_\Om \varphi d\pb(\xi_t) & = \lim_{t \to 0} \frac1t \int_0^t \int_\Om \varphi \p_t \log \pb|_{t=s}\; d\pb(s)\; ds\\
	& = \int_\Om \varphi \p_X \log \pb\; d\pb(\xi_0),
	\end{align*}
	using (\ref{eq:F-limit}) in the last equation, and from this, (\ref{eq:phi-p-deriv}) follows.
\end{proof}

Let  $V$ be a   topological real vector space,  which  may be infinite dimensional. 
We denote by $ V^M$   the vector space  of  all   $V$-valued functions  on $M$.
A $V$-valued   function  $\varphi$ will stand
for the  coordinate  functions  on $M$, or  in general,    a feature  of $M$ (cf. \cite{BKRW1998}).
Let $V^*$ denote  the   dual space of $V$.
For $l\in V^*$ we denote  the composition $l \circ \varphi$ by
$\varphi^l$. This should be considered as the $l$-th coordinate of
$\varphi$. 

Recall that {\it an estimator} is  a map  $\hat \sigma: \Om  \to M$. If $k, k' > 1$ are dual indices, i.e., $k^{-1} + {k'}^{-1} = 1$, and given a $k'$-integrable parametrized measure model $(M, \Om, \pb)$ and a function $\varphi \in V^M$, we define
\begin{equation*}
L^k_\varphi (M, \Om) := \{ \hat \sigma: \Om \to  M \mid \varphi^l\circ \hat \sigma  \in L^k_M (\Om)  \text{ for all } l \in V^*\}.
\end{equation*}
We call an estimator $\hat{\sigma} \in L^k_\varphi (M, \Om)$ {\em $\varphi$-regular }if $\varphi^l\circ \hat \sigma \in L^k_M(\Om)$ is regular for all $l \in V^\ast$.

Any  $\hat \sigma \in L^k_\varphi (M, \Om)$ induces a  $V^{**}$-valued function $\varphi_{\hat \sigma}$  on $M$ by computing the  expectation of the  composition $\varphi \circ \hat \sigma$
as follows
\begin{equation}
\la\varphi_{\hat \sigma} (\xi), l\ra: = \mathbb E_{\pb(\xi)} ( \varphi^l\circ \hat \sigma)= \int_\Om \varphi^l \circ \hat \sigma\, d\pb(\xi)\label{eq:expec}
\end{equation}
for any  $l \in  V^*$. If $\hat \sigma \in L^k_\varphi (M, \Om)$ is $\varphi$-regular, then Proposition \ref{prop:change-diff-int} immediately implies that $\varphi_{\hat \sigma}: M \to V^{\ast \ast}$ is G\^ateaux-differentiable   with G\^ateaux-derivative
\begin{equation} 
\la \p_X \varphi_{ \hat\sigma}(\xi), l\ra  =  \int_\Om\varphi ^l \circ \hat \sigma  \cdot \p _X \log  \pb (\xi)\, \pb (\xi).\label{eq:com1}
\end{equation}

\begin{definition}\label{def:obs}  A  $V$-valued Gateaux-differentiable function $f$ on $M$  is called {\it visible}   if  $ df$ vanishes on $\ker d\pb \subset TM$.
\end{definition}

For instance, the function from (\ref{eq:expec}) is visible.

\begin{example}\label{ex:value}
If $\pb: M \to \Mm(\Om)$ is a $C^1$-immersion, that is, $\ker d\pb = 0$, then evidently, any G\^ateaux-differentiable function $\varphi: M \to V$ into any topological vector space is visible.

A typical example of such a map is used in semi-parametric statistics, where one considers product manifolds $M = P_1 \times P_2$ with $P_1 $ an open subset  of  $\R^n$  and   $P_2$ a subset
of an infinite dimensional Banach space  $B$, see e.g.\cite[p. 2]{BKRW1998}. In this case, one considers the canonical projection $\varphi_1: M = P_1 \times P_2 \to P_1 \subset \R^n$.

\end{example}

\begin{example}\label{ex:vis}  Let $\varphi : \Ss (\Om) \to \R$ be a  $C^1$-differentiable   function. Then  $f : = \varphi \circ  \pb:  M \to \R$  is a visible function.
\end{example}

\begin{example}\label{ex:push}  Most important  visible     functions are associated with   estimators, which are defined  as in (\ref{eq:expec})   and whose G\^ateaux-differentiability is   established by Proposition \ref{prop:change-diff-int}.
\end{example}

\begin{remark}\label{rem:nophi} Classically,  one considers     2-integrable statistical models  $P$ which are  open subsets  in a vector space $V$   with  coordinates $\theta$ 
\cite{BKRW1998, Borovkov1998, CT2006, WMS2008}.
In this case $\theta$   is regarded as  the parameter  of $P$  and  $\varphi$ is the identity  mapping and hence  omitted. Estimators then  are denoted by $\theta^*$, $\hat  \theta$ or $T$.
The  function $\varphi_{\hat \sigma} (\xi)$ in this case, denoted by $E_\theta (\theta^*)$, is the mean value  (w.r.t.the  measure $\theta$) of   the estimator  $\theta^*$ regarded as an element in $V^{**}$.
\end{remark}

\subsection{Generalized gradient  and pre-gradient   of    visible   functions}\label{subs:pregrad}

From this point onward, we shall assume that $(M, \Om, \pb)$ is a $2$-integrable parametrized measure model, so that in particular the Fisher metric $\g$ on $M$ is well defined.

Let $f$  be a visible     function  on  $(M, \Om, \pb)$. Since  $df$ vanishes  on   the kernel of $\pb$, 
the derivative $\p_X f$  depends only on the   projection $pr (X) \in  \hat T M$.  

\begin{definition}\label{def:grad} A   section $\xi \mapsto \nabla _{\hat \g}  f(\xi) \in \hat  T^{\hat \g}_\xi M$  will be called the {\it  generalized  Fisher gradient} of a visible function  $f$, if for
all $X\in  T_\xi M$ we have
$$ df(X) =\hat \g  (pr (X), \nabla _{\hat \g} f).$$
\end{definition}

Clearly, if the generalized Fisher gradient  $\nabla_{\hat \g} f$ exists then it  is unique, and by the Riesz representation theorem the   generalized  Fisher gradient  of a  visible function $f$ exists iff for all $\xi \in M$ the linear functional $df_{\xi}$ is bounded  w.r.t.  the  reduced Fisher metric. As in \cite{Le2016} we denote   
 $$\Ll^k_1(\Om): = \{(f, \mu)|\, \mu \in \Mm(\Om) \text{ and } f \in L^k(\Om, \mu)\}.$$

 For a map $\pb: P \to \Mm(\Om)$ we denote by
$\pb^*(\Ll^k_1(\Om))$  the  pull-back ``fibration" (also called the fiber product)  $P \times _{\Mm (\Om)} \Ll^k_1(\Om)$.

\begin{definition}\label{def:pregrad} 
 Let $h$ be a  visible   function on $M$.    A  section
$$M \to \pb^*(\Ll^2_1(\Om)), \, \xi \mapsto \nabla h_\xi \in L^2(\Om, \pb(\xi)), $$
 is called {\it a pre-gradient of $h$}, if   for all $\xi \in M$ and  $X \in T_\xi M$  we have  
$$ dh (X) = \E_{\pb(\xi)}((\p_X \log  \pb)  \cdot \nabla h_\xi) .$$
\end{definition}

By definition,  a pre-gradient  of a  visible   function, if it exists,  is only  determined up to   a term  that is   $L^2$-orthogonal  to the   image  
$d \pb (T_\xi P) \subset L^2 (\Om, \pb (\xi))$.

\begin{lemma}\label{lem:exi1}  The existence  of  a pre-gradient  of  a   visible function $h$ implies  the existence of  the generalized  gradient  of $h$.
\end{lemma}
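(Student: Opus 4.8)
The plan is to reduce to the Riesz-representation criterion stated immediately after Definition \ref{def:grad}: the generalized Fisher gradient of $h$ exists as soon as, for every $\xi$, the linear functional $dh_\xi$ is bounded with respect to the reduced Fisher metric $\hat\g$. The given pre-gradient $\nabla h_\xi\in L^2(\Om,\pb(\xi))$ will serve as the object witnessing this boundedness, through the Cauchy--Schwarz inequality in $L^2(\Om,\pb(\xi))$.

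First I would rewrite the defining identity of the pre-gradient in terms of the map $\Lambda_\xi\colon X\mapsto\p_X\log\pb$. Since $\E_{\pb(\xi)}((\p_X\log\pb)\cdot\nabla h_\xi) = \la\Lambda_\xi(X);\nabla h_\xi\ra_{L^2(\Om,\pb(\xi))}$, the pre-gradient condition becomes
\[
dh(X) = \la\Lambda_\xi(X);\,\nabla h_\xi\ra_{L^2(\Om,\pb(\xi))}\qquad\text{for all }X\in T_\xi M.
\]
In particular $dh(X)=0$ whenever $\Lambda_\xi(X)=0$, so $dh_\xi$ descends to a well-defined linear functional on the essential tangent space $\hat T_\xi M = T_\xi M/\ker\Lambda_\xi$. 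This is consistent with the hypothesis that $h$ is visible: since $d\pb(X)=(\p_X\log\pb)\,\pb(\xi)$ by (\ref{eq:log}), one has $\ker d\pb=\ker\Lambda_\xi$ fiberwise, so visibility is precisely the statement that $dh_\xi$ annihilates $\ker\Lambda_\xi$.

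Next comes the key estimate. Cauchy--Schwarz applied to the displayed identity gives $|dh(X)|\le\|\Lambda_\xi(X)\|_{L^2(\Om,\pb(\xi))}\,\|\nabla h_\xi\|_{L^2(\Om,\pb(\xi))}$, and by (\ref{eq:fisher}) together with the definition of the reduced metric one has $\|\Lambda_\xi(X)\|_{L^2(\Om,\pb(\xi))}^2=\g_\xi(X,X)=\hat\g(pr(X),pr(X))$. Hence
\[
|dh(X)| \le \|\nabla h_\xi\|_{L^2(\Om,\pb(\xi))}\,\sqrt{\hat\g(pr(X),pr(X))},
\]
which is exactly the boundedness of $dh_\xi$ with respect to $\hat\g$, with operator norm bounded by $\|\nabla h_\xi\|_{L^2(\Om,\pb(\xi))}$.

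Finally I would invoke Riesz representation on the fiberwise completion $\hat T^{\hat\g}_\xi M$. As $\hat\g$ makes $\hat T_\xi M$ a pre-Hilbert space, the bounded functional $dh_\xi$ extends uniquely to $\hat T^{\hat\g}_\xi M$ and is represented there by a unique vector $\nabla_{\hat\g}h(\xi)$ with $dh(X)=\hat\g(pr(X),\nabla_{\hat\g}h(\xi))$ for all $X$; assembling these over $\xi$ produces the section required in Definition \ref{def:grad}. Concretely, under the isometry $\hat T^{\hat\g}_\xi M\cong\overline{\Lambda_\xi(T_\xi M)}\subset L^2(\Om,\pb(\xi))$ this gradient is the orthogonal projection of $\nabla h_\xi$ onto the closed subspace $\overline{\Lambda_\xi(T_\xi M)}$, which also explains the $L^2$-orthogonal ambiguity of the pre-gradient observed before the lemma. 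I expect the only genuine subtlety to be this passage to the completion: when $M$ is infinite dimensional the image of $\Lambda_\xi$ need not be closed in $L^2$, so the representing vector genuinely lives in $\hat T^{\hat\g}_\xi M$ rather than in $\hat T_\xi M$, and Riesz must be applied on the completed Hilbert space.
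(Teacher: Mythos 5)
Your proof is correct and takes essentially the same route as the paper: both arguments rest on the isometry between the completed essential tangent space $\hat T^{\hat\g}_\xi M$ and the closed subspace $\overline{\Lambda_\xi(T_\xi M)}\subset L^2(\Om,\pb(\xi))$, with the generalized gradient arising as the orthogonal projection of the pre-gradient onto that subspace. The only cosmetic difference is that you enter through the Riesz-boundedness criterion stated after Definition \ref{def:grad} (via an explicit Cauchy--Schwarz bound), whereas the paper writes down the projection formula $\nabla_{\hat\g}h = e^{-1}(\Pi(\nabla h))$ directly---the very vector you identify in your final paragraph.
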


\begin{proof}   For  any $ \xi\in P$   the  map 
\begin{equation}
e: \hat T_\xi P \to L^2 (\Om, \pb(\xi)), \, X \mapsto \p _X \log \pb (\cdot;\xi),\label{eq:embe}
\end{equation}
  is  an embedding.   Here  $\p _X \log \pb$  denotes the value $\p _{\tilde X} \log \pb$ for some (and  hence any) $\tilde  X \in pr ^{-1} (X) \in T_\xi P$.
    The embedding  $e$ is
an isometric embedding   w.r.t. the Fisher  metric $\hat \g$ on  $\hat T_\xi P$ and
the $L^2$-metric in $L^2 (\Om, \pb(\xi))$, according to the definition
of the  (reduced) Fisher metric.  
The isometric embedding $e$  extends to an isometric embedding, also denoted by $e$,  of the  closure $\hat T_\xi  ^{\hat \g} P$  by setting  for  any limiting sequence  $\{ v_k \in  \hat T_\xi  P\}$
$$e(\lim_{k\to \infty}  v_k) : = \lim_{k \to \infty} e(v_k).$$ 
Now assume that $\nabla  f$  is  a pre-gradient of $f$.
Denote by  $\Pi$ the orthogonal  projection of  $L^2 (\Om, \pb (\xi))$ onto the closed   subspace  $e(\hat T^{\hat\g}_\xi M)$.
 Then
$$ \nabla _{\hat \g} f =  e^{-1}  ( \Pi  (\nabla  f)).$$
This completes the proof  of Lemma \ref{lem:exi1}.
\end{proof}

\begin{proposition}\label{prop:cov} 1. Let $(M, \Om,  \pb)$  be a  2-integrable  parametrized  measure model   and $f \in L^2_M (\Om)$  a regular  function.
Then  the section of the pullback fibration $\pb^*(\Ll^2_1(\Om))$ defined by $ \xi \mapsto  f \in L^2 (\Om, \pb(\xi))$ is a pre-gradient of the  visible function $\E_{\pb (\xi)}(f)$.

2. Let $(M, \Om,  \pb)$  be a  2-integrable   statistical model   and $f \in L^2_M(\Om)$ a regular function.  
Then  the section of the pullback fibration $\pb^*(\Ll^2_1(\Om))$ defined by $ \xi \mapsto f - \mathbb \E_{\pb(\xi)}(f) \in L^2 (\Om, \pb(\xi))$ is a pre-gradient of the  visible function $E_{\pb (\xi)}(f)$.
\end{proposition}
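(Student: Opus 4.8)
The plan is to reduce both statements to Proposition~\ref{prop:change-diff-int}, applied with the self-dual pair $k=k'=2$. For the first part, set $h(\xi):=\E_{\pb(\xi)}(f)$. Since the model is $2$-integrable (hence $k'$-integrable) and $f\in L^2_M(\Om)$ is regular, the hypotheses of Proposition~\ref{prop:change-diff-int} are met, and it yields both the G\^ateaux-differentiability of $h$ and the formula
\[
\p_X h(\xi) = \E_{\pb(\xi)}\big(f\cdot \p_X\log\pb(\xi)\big).
\]
This is exactly the defining identity in Definition~\ref{def:pregrad} for the section $\xi\mapsto f\in L^2(\Om,\pb(\xi))$, which is well defined because $f\in L^2_M(\Om)$. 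I would also record that $h$ is visible: for $X\in\ker d\pb$ one has $\p_X\log\pb = d\{d_\xi\pb(X)\}/d\pb(\xi)=0$, hence $dh(X)=0$. This settles Part~1.

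For the second part the function $h$ is unchanged, so visibility and differentiability are already in hand; only the pre-gradient identity must be rechecked for the new section $g:=f-\E_{\pb(\xi)}(f)$, which again lies in $L^2(\Om,\pb(\xi))$ because $\pb(\xi)$ is a probability measure and $f\in L^2(\Om,\pb(\xi))$. Expanding by linearity of the expectation,
\[
\E_{\pb(\xi)}\big((\p_X\log\pb)\cdot g\big)
= \E_{\pb(\xi)}\big((\p_X\log\pb)\cdot f\big) - \E_{\pb(\xi)}(f)\,\E_{\pb(\xi)}(\p_X\log\pb).
\]
By Part~1 the first term equals $dh(X)$, so the claim reduces to showing $\E_{\pb(\xi)}(\p_X\log\pb)=0$ on a statistical model.

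To prove this I would integrate the logarithmic derivative against $\pb(\xi)$ and unwind the Radon--Nikodym derivative (using that $d_\xi\pb(X)$ is dominated by $\pb(\xi)$),
\[
\E_{\pb(\xi)}(\p_X\log\pb) = \int_\Om \frac{d\{d_\xi\pb(X)\}}{d\pb(\xi)}\,d\pb(\xi) = \big(d_\xi\pb(X)\big)(\Om).
\]
The total-mass functional $\mu\mapsto\mu(\Om)$ is bounded and linear on $\Ss(\Om)$, hence commutes with the Fr\'echet derivative of $\pb$; since $\pb(\xi)(\Om)\equiv 1$ on a statistical model, differentiating in the direction $X$ gives $\big(d_\xi\pb(X)\big)(\Om)=0$, which is the required vanishing.

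The only genuinely delicate point is this final computation, namely justifying that the integral of the logarithmic derivative coincides with the derivative of the (constant) total mass; everything else is bookkeeping. It is worth noting that Part~2 is also transparent from the remark following Definition~\ref{def:pregrad}: the two candidate sections differ by the constant $\E_{\pb(\xi)}(f)$, and on a statistical model this constant is $L^2(\Om,\pb(\xi))$-orthogonal to $\{\p_X\log\pb:X\in T_\xi M\}$ precisely because $\E_{\pb(\xi)}(\p_X\log\pb)=0$; hence both sections represent a pre-gradient of the same visible function $\E_{\pb(\xi)}(f)$.
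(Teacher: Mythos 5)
Your proposal is correct, and its overall skeleton matches the paper's: Part 1 is a direct application of Proposition \ref{prop:change-diff-int} with $k=k'=2$, and Part 2 reduces to the vanishing identity $\E_{\pb(\xi)}(\p_X\log\pb(\xi))=0$ on a statistical model, after which linearity of the expectation finishes the argument. Where you diverge is in how that identity is established. The paper obtains it as a consequence of its formula (\ref{eq:coma}), i.e.\ implicitly by applying the differentiation-under-the-integral-sign result to the constant function $f\equiv 1$ (which is trivially regular, since $\|1\|_{L^2(\Om,\pb(\xi))}=1$ on a statistical model). You instead prove it from first principles: unwinding the Radon--Nikodym derivative gives $\E_{\pb(\xi)}(\p_X\log\pb)=\bigl(d_\xi\pb(X)\bigr)(\Om)$, and since the total-mass functional $\mu\mapsto\mu(\Om)$ is bounded and linear on $\Ss(\Om)$, it commutes with the Fr\'echet derivative of $\pb$, so constancy of $\pb(\xi)(\Om)\equiv 1$ forces $\bigl(d_\xi\pb(X)\bigr)(\Om)=0$. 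Your route is more elementary and self-contained -- it needs only the definition (\ref{eq:log}) of the logarithmic derivative and the $C^1$-property of $\pb$, not the analytic machinery behind Proposition \ref{prop:change-diff-int} for this step -- whereas the paper's route is shorter given that (\ref{eq:coma}) is already in hand. Your closing observation, that the two candidate sections differ by a constant which is $L^2$-orthogonal to the scores, is exactly the paper's remark that pre-gradients are determined only up to terms orthogonal to $d\pb(T_\xi P)$, and is a nice way to see Part 2 as a repackaging of Part 1.
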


\begin{proof} 
Let $X \in T_\xi M$.  Using  Proposition \ref{prop:change-diff-int}  we obtain
\begin{equation}
\p_X \E_{\pb (\xi)}(f)(\xi) =  \int_\Om  f  \cdot \p _X \log  \pb (\xi)\, \pb (\xi)\label{eq:coma}
\end{equation}
we obtain the first assertion of Proposition \ref{prop:cov}.

To prove the second assertion we use the following   identity, which is a consequence of 
  (\ref{eq:coma})
\begin{equation}
\int_\Om \p_X  \log  \pb (x;\xi)\, d\pb(x;\xi) = 0.\label{eq:id1}
\end{equation}
 Multiplying (\ref{eq:id1}) with  $(- \mathbb \E_{p(\xi)}(f))$,   and plugging it  into (\ref{eq:coma}),
 we  obtain
$$\p_X \E_{\pb (\xi)}(f) =\int_{\Om} (f(x) -\mathbb E_{p(\xi)}(f) )\cdot \p_X \log  \pb (x;\xi)\, d\pb(x;\xi),$$
which implies  the second assertion of  Proposition \ref{prop:cov}.
\end{proof}

\subsection{Application to the case of finite sample  spaces}\label{subs:finite}  
 Let $\Om_n: = \{ \om_1, \cdots, \om_n\}$ be a finite  sample space  of $n$ elementary events. 
In this subsection we  apply the  formalism of visible functions  and their    (pre)-gradients to
  compute the Fisher metric, its inverse  and    the  Fisher gradient  of a function on  $\Mm_+ (\Om_n)$  and  its restriction to $\Pp_+ (\Om_n)$. Since the parametrization  of $\Mm_+ (\Om_n)$   is natural,  we have $\hat \g = \g$.
	
 Denote by  $L(\Ss(\Om_n), \R)$   the space  of $\R$-valued linear functions on $\Ss(\Om_n)$.
As in  Example \ref{ex:push}, we consider the following  canonical  linear  map 
$$E: \R^{\Om_n}  \to L(\Ss(\Om_n), \R)   $$ 
$$\la E(f) , \mu\ra  := \E_\mu (f) =  \int_\Om  f d\mu = \sum _{i=1}^n  f(\om_i) \mu(\om_i).$$
Here $\E_\mu$ 
stands for  the expectation  w.r.t. to the  (signed) measure $\mu \in \Ss (\Om_n)$.  

\begin{proposition}\label{prop:inv}  1) For any  $f \in \R^{\Om_n}$  and   any  $\mu\in \Ss (\Om_n)$ we  have
$$d  E(f) _\mu= E(f) \in  L(\Ss(\Om_n), \R) = T_\mu^* \Ss(\Om_n) .$$

2)  For  any $\mu \in \Ss(\Om_n)$ the  space  $\{ d E(f)_\mu |  f \in \R^ {\Om_n}\}$   coincides  with   $T^*_\mu \Ss(\Om_n)$.

3) Denote by $\g$ the Fisher metric on  $\Mm_+ (\Om_n)$.   
Then  for any $f, g \in \R^{\Om_n}$ we have
 $$ \g ^{-1}_\mu  (dE(f), dE(f)) =  E_\mu ( f\cdot g). $$
\end{proposition}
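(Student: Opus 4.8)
The plan is to exploit the fact that on a finite sample space everything linearizes. Identify $\Ss(\Om_n)$ canonically with $\R^n$ via $\mu \mapsto (\mu(\om_1),\dots,\mu(\om_n))$, so that at every point $\mu$ the tangent space is $\Ss(\Om_n)$ itself and the cotangent space is $L(\Ss(\Om_n),\R)$, with no further identification required; I will write $\mu_i := \mu(\om_i)$ and $v_i := v(\om_i)$ for a tangent vector $v$. For part 1), I would simply observe that $E(f)\colon \Ss(\Om_n) \to \R$, $\mu \mapsto \la E(f),\mu\ra = \sum_i f(\om_i)\mu_i$, is \emph{linear} in $\mu$; since the differential of a linear map at any point equals the map itself under the canonical identification $T_\mu\Ss(\Om_n) = \Ss(\Om_n)$, we get $dE(f)_\mu = E(f)$ for every $\mu$.

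For part 2), I would use the Dirac measures $\delta_{\om_1},\dots,\delta_{\om_n}$ as a basis of $\Ss(\Om_n)$. Given an arbitrary $\ell \in L(\Ss(\Om_n),\R)$, define $f \in \R^{\Om_n}$ by $f(\om_i) := \ell(\delta_{\om_i})$; then $E(f)$ and $\ell$ agree on the basis, hence $E(f) = \ell$. This shows $E$ is surjective, and since $\dim \R^{\Om_n} = n = \dim L(\Ss(\Om_n),\R)$ it is an isomorphism, so the differentials $dE(f)_\mu = E(f)$ exhaust all of $T^*_\mu\Ss(\Om_n)$.

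Part 3) is the substantive computation. Because the inclusion $\pb\colon \Mm_+(\Om_n)\hookrightarrow \Ss(\Om_n)$ is an immersion, $\ker\Lambda_\mu = 0$ and $\hat\g = \g$, as already noted. With the natural parametrization $\pb = \mathrm{id}$ we have $d_\mu\pb(v)=v$, so by (\ref{eq:log}) the logarithmic derivative $\p_v\log\pb(\mu) = dv/d\mu$ takes the value $v_i/\mu_i$ at $\om_i$. Plugging into (\ref{eq:fisher}) gives
$$\g_\mu(v,w) = \la dv/d\mu;\, dw/d\mu\ra_{L^2(\Om_n,\mu)} = \sum_{i=1}^n \frac{v_i}{\mu_i}\,\frac{w_i}{\mu_i}\,\mu_i = \sum_{i=1}^n \frac{v_iw_i}{\mu_i},$$
so that $\g_\mu = \mathrm{diag}(1/\mu_1,\dots,1/\mu_n)$ is positive definite on $\Mm_+(\Om_n)$, with inverse $\g_\mu^{-1} = \mathrm{diag}(\mu_1,\dots,\mu_n)$ as a quadratic form on the cotangent space. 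Since by part 1) the covector $dE(f) = E(f)$ has components $f(\om_i)$, the induced inner product on covectors yields
$$\g_\mu^{-1}(dE(f),dE(g)) = \sum_{i=1}^n \mu_i\, f(\om_i)\, g(\om_i) = \E_\mu(f\cdot g),$$
which is the assertion (reading the right-hand side as $\E_\mu(f\cdot g)$, so the displayed $dE(f),dE(f)$ is a typo for $dE(f),dE(g)$).

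The only point requiring care — the nearest thing to an obstacle — is the bookkeeping of the three canonical identifications that are in play simultaneously: measures with $\R^n$, the inverse metric as a bilinear form on the cotangent space via the Riesz isomorphism determined by $\g_\mu$, and the covectors $E(f)$ with their component vectors $(f(\om_i))_i$. Once these are pinned down and the logarithmic derivative is written correctly, part 3) reduces to inverting a diagonal matrix and the final identity is immediate.
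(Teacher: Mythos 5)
Your proof is correct, and for parts 1) and 2) it matches the paper's argument (linearity of $E(f)$, then a dimension count; your explicit use of Dirac measures to show surjectivity is a harmless elaboration of the paper's remark that $\dim(E(\R^{\Om_n})) = n$). For part 3), however, you take a genuinely different route. You compute everything in coordinates: you write out the logarithmic derivative $\p_v\log\pb(\mu) = dv/d\mu$ explicitly, obtain the Fisher metric as the diagonal matrix $\mathrm{diag}(1/\mu_1,\dots,1/\mu_n)$, invert it, and pair the covectors $dE(f)$, $dE(g)$ directly. The paper instead argues coordinate-free: it invokes Proposition \ref{prop:cov}.1 to identify $f$ itself as a pre-gradient of the visible function $E(f)$, notes that $\Lambda_\mu\colon T_\mu\Ss(\Om_n)\to L^2(\Om_n,\mu)$ is an isomorphism (so the pre-gradient projects to nothing and $\Lambda_\mu(\nabla_\g E(f))=f$ exactly), and then uses the defining isometry property of the Fisher metric to get $\g^{-1}_\mu(dE(f),dE(g)) = \int_{\Om_n}f\,g\,d\mu$ without ever writing a matrix. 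Your computation is more elementary and self-contained — it needs only the definition (\ref{eq:fisher}) and linear algebra, and as a by-product it establishes the explicit formula $\g(\mu)=\sum_i \mu_i^{-1}d\mu_i^2$ that the paper only records afterwards in Remark \ref{rem:explicit}. What the paper's argument buys is precisely what Subsection \ref{subs:finite} is advertised to demonstrate: that the general pre-gradient formalism of Section \ref{sec:vis} specializes cleanly to the finite case, an argument pattern that survives in infinite dimensions where diagonalization is unavailable. You also correctly flagged the typo in the statement ($dE(f),dE(f)$ should read $dE(f),dE(g)$), which the paper's own proof confirms.
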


\begin{proof}  1. The first assertion holds, because $E(f)$ is a linear functional on $\Ss (\Om_n)$.

2.  The  second assertion follows from  the first one, noting  that $\dim (E(\R^{\Om_n})) = n = \dim  \Ss (\Om_n)$.

3. Let  us  prove the last assertion. Assume that $\mu \in \Mm_+(\Om_n)$.   Then  there exists a  linear   isomorphism
\begin{equation}
\Lambda_\mu: T_\mu (\Ss(\Om_n)) \to L_2 (\Om, \mu), \;   X \mapsto  \p _X \log  \bar \mu, \label{eq:log1}
\end{equation}
where  $\mu = \bar \mu \cdot \mu_0$  for some  $\mu_0 \in \Mm_+  (\Om_n)$.   It is known  that  the  RHS  of (\ref{eq:log1}) does not depend
on the choice   of $\mu_0$ and by (\ref{eq:log}) we  also have  $\p _X \mu =  \p _X \log (\bar \mu) \cdot  \mu$.
Since  $\Lambda_\mu$ is an isomorphism, Proposition \ref{prop:cov}.1 yields immediately

$$\Lambda_\mu ( \nabla_{\g}  E(f)  _\mu) =  f\in L ^2 (\Om_n, \mu) . $$

Hence
\begin{eqnarray*}
 \g ^{-1}_\mu( d E(f),  d E(g)) = \g_\mu ( \nabla_{\g}  E(f), \nabla_{\g} E(g) )  \\
= \int _{\Om_n}  \Lambda _\mu (\nabla_{\g}E(f)) \cdot \Lambda _\mu  (\nabla_{\g}E(g))\, d\mu = E_\mu (f\cdot g).   
\end{eqnarray*}
This proves  the  third assertion immediately.
\end{proof}

For a constant $c \in \R$ denote by $c_{|  \Om_n}$ the constant  function on  $\Om_n$  taking  value $c$.

\begin{proposition}\label{prop:prop} The    induced (inverse)  Fisher metric $\g ^{-1}$  on $T^*\Pp_+ (\Om_n)$ has  the following   form
$$\g^{-1} (d E(f), dE(g))  =  E_\mu  [( f -E_\mu (f)_{| \Om_n}) \cdot ( g -E_\mu (g)_{|\Om_n})]  .$$
\end{proposition}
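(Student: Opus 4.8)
The plan is to reduce everything to the dual-pairing computation already performed in Proposition \ref{prop:inv}.3, the only genuinely new point being that on the probability simplex the map $\Lambda_\mu$ of (\ref{eq:log1}) is no longer surjective. First I would fix $\mu \in \Pp_+(\Om_n)$ and record the geometry of the inclusion $\Pp_+(\Om_n)\subset \Mm_+(\Om_n)$. Since $\Pp_+(\Om_n)$ is an open subset of the affine hyperplane $\{\mu:\mu(\Om_n)=1\}$, its tangent space is the space of mass-zero signed measures
\[
T_\mu\Pp_+(\Om_n)=\{X\in\Ss(\Om_n):X(\Om_n)=0\}.
\]
The parametrization being natural, $\hat\g=\g$, so $\Lambda_\mu$ restricted to $T_\mu\Pp_+(\Om_n)$ is exactly the isometric embedding $e$ of Lemma \ref{lem:exi1}. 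The short computation $E_\mu(\p_X\log\mu)=\sum_i X(\om_i)=X(\Om_n)=0$ shows its image lies in, and by a dimension count ($n-1=n-1$) equals, the mean-zero subspace $H_\mu:=\{h\in L^2(\Om_n,\mu):E_\mu(h)=0\}$, whose $L^2(\mu)$-orthogonal complement is precisely the constant functions.

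Next I would apply Proposition \ref{prop:cov}.2 to the statistical model $\Pp_+(\Om_n)$ with $\pb$ the identity parametrization: the section $\mu\mapsto f-E_\mu(f)$ is a pre-gradient of the visible function $\mu\mapsto E_\mu(f)$. The key observation is that $f-E_\mu(f)$ already lies in $H_\mu=\mathrm{Im}\,\Lambda_\mu$, because $E_\mu(f-E_\mu(f))=0$. Hence the orthogonal projection $\Pi$ of Lemma \ref{lem:exi1} acts as the identity on it, so that the generalized Fisher gradient is simply
\[
\nabla_\g E(f)_\mu=\Lambda_\mu^{-1}\bigl(f-E_\mu(f)\bigr).
\]

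Finally I would compute the inverse Fisher metric verbatim as in the proof of Proposition \ref{prop:inv}.3, now feeding in these gradients:
\begin{align*}
\g^{-1}(dE(f),dE(g))
&=\g_\mu\bigl(\nabla_\g E(f),\nabla_\g E(g)\bigr)\\
&=\int_{\Om_n}\Lambda_\mu(\nabla_\g E(f))\cdot\Lambda_\mu(\nabla_\g E(g))\,d\mu\\
&=E_\mu\bigl[(f-E_\mu(f))\,(g-E_\mu(g))\bigr],
\end{align*}
which is the asserted formula; bilinearity then gives the general statement from the diagonal case.

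The main obstacle — and really the entire content beyond Proposition \ref{prop:inv} — is the identification $\mathrm{Im}\,\Lambda_\mu=H_\mu$ together with the verification that the pre-gradient $f-E_\mu(f)$ already belongs to this image, so that no nontrivial projection survives. Equivalently, one must see that passing from $\Mm_+(\Om_n)$ to the probability simplex kills exactly the constant (total-mass) direction in $L^2(\Om_n,\mu)$, which is precisely what the mean-subtraction in Proposition \ref{prop:cov}.2 encodes; everything else is the same computation as for $\Mm_+(\Om_n)$.
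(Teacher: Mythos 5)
Your proof is correct and follows essentially the same route as the paper's: both identify $\Lambda_\mu(T_\mu\Pp_+(\Om_n))$ with the mean-zero subspace of $L^2(\Om_n,\mu)$ (the paper via the vanishing of $dE(c_{|\Om_n})$ on $T_\mu\Pp_+$, you via $E_\mu(\p_X\log\mu)=X(\Om_n)=0$ plus a dimension count), then arrive at $\Lambda_\mu(\nabla_\g E(f)) = f - E_\mu(f)_{|\Om_n}$ and conclude by the same $L^2$-pairing computation as in Proposition \ref{prop:inv}.3. Your explicit appeal to Proposition \ref{prop:cov}.2 together with the observation that the mean-subtracted pre-gradient already lies in the image (so the projection of Lemma \ref{lem:exi1} is trivial) is exactly the detail the paper compresses into ``we obtain easily.''
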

\begin{proof}   Note  that  for any constant $c$  the restriction  of  $dE (c_{|\Om_n})$ to  $T^*_\mu \Pp_+$ vanishes  and hence
$$\Lambda_\mu (T_\mu \Pp_+ (\Om_n)) = \{  g \in L^2 (\Om_n, \mu)|\,  E_\mu (g) = 0\},$$
 we obtain easily
\begin{equation}
\Lambda _\mu  (\nabla_{\g}E(f))  = f - E_\mu (f)_{|\Om_n}  \in  T^*_\mu (\Pp _+ (\Om_n)) = j ^* (T^*_\mu  (\Pp (\Om_n)).\label{eq:proj}
\end{equation}
This  proves  Proposition  \ref{prop:prop}.
\end{proof}

\begin{remark}\label{rem:explicit} Let $\delta_i$  denote the Dirac  function  on $\Om_n:  \delta_i (\om_j) = \delta ^i_j$.
The first assertion of Proposition \ref{prop:inv}   implies that  $\{E(\delta_i)|\, i= \overline{1,n}\}$   form a  basis  of the  $C^\infty$-algebra of smooth functions on $\Ss(\Om_n)$ 
(and resp. on  the open set $\Mm_+ (\Om_n)$  of $\Ss(\Om_n)$). In other words  we can take $E(\delta_i)$  to be coordinates of  $\Mm_+(\Om_n)$.
Writing  $\mu = \sum \mu_i \hat \delta_i$, where $\hat \delta_i$ denotes  the  Dirac measure concentrated  at $\om_i$, we have
$$ E(\delta_i)  (\mu) = \mu_i.$$
So we can identify  $E(\delta_i)$ with $\mu_i$.
Proposition  \ref{prop:inv} implies that  
\begin{equation}
\g (\mu) = \sum_{i=1}^n \frac{1}{\mu_i} d\mu_i ^2 .\label{eq:fisher1}
\end{equation}
By definition, the Fisher metric  on $\Pp_+ (\Om_n)$ equals the restriction of  the Fisher metric  on $\Mm_+ (\Om_n)$  to $\Pp_+ (\Om_n)$.
\end{remark}

\begin{proposition}\label{prop:fgradn}  Let $\tilde f$ be a function on $\Mm_+(\Om_n)$. Then
\begin{equation}
\nabla_\g \tilde f (\mu) = \sum_i \mu_i {\p \tilde f \over \p  \mu_i}  \p \mu_i.\label{eq:fgradnm}
\end{equation}
Let $f$  be the restriction of $\tilde f$ to  $\Pp_+ (\Om_n)$. Then
\begin{equation}
\nabla_\g  f (\mu) = \sum_i \mu_i (\frac{\p \tilde f}{\p \mu_i} - \lambda) \p \mu_i, \label{eq:fgradnp}
\end{equation}
where
$$\lambda =  \sum_i \mu_i {\p \tilde f\over \p \mu_i}.$$
\end{proposition}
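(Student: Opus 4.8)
The plan is to read both formulas directly off the diagonal expression (\ref{eq:fisher1}) for the Fisher metric on $\Mm_+(\Om_n)$: for the first assertion by inverting the metric, and for the second by projecting the result onto the tangent space of $\Pp_+(\Om_n)$. For (\ref{eq:fgradnm}) I would start from the defining property $d\tilde f(X)=\g_\mu(X,\nabla_\g\tilde f)$ of the Fisher gradient. Writing $\nabla_\g\tilde f=\sum_j a_j\,\p\mu_j$ and testing against $X=\p\mu_k$, the diagonal form (\ref{eq:fisher1}) gives $\g_\mu(\p\mu_k,\p\mu_j)=\mu_k^{-1}\delta_{kj}$, so the right-hand side collapses to $a_k/\mu_k$; since $d\tilde f(\p\mu_k)=\p\tilde f/\p\mu_k$, comparison yields $a_k=\mu_k\,\p\tilde f/\p\mu_k$, which is (\ref{eq:fgradnm}). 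The same conclusion follows from the $\Lambda_\mu$-picture of Proposition \ref{prop:inv}: there $\Lambda_\mu(\p\mu_i)=\mu_i^{-1}\delta_i$, while $d\tilde f_\mu=E(\sum_i(\p\tilde f/\p\mu_i)\delta_i)$ gives $\Lambda_\mu(\nabla_\g\tilde f)=\sum_i(\p\tilde f/\p\mu_i)\delta_i$, and inverting $\Lambda_\mu$ reproduces the coefficients $\mu_i\,\p\tilde f/\p\mu_i$.

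For the second assertion I would use that the Fisher gradient of $f=\tilde f|_{\Pp_+(\Om_n)}$ is the $\g$-orthogonal projection of $\nabla_\g\tilde f$ onto $T_\mu\Pp_+(\Om_n)=\{\sum_i v_i\,\p\mu_i:\sum_i v_i=0\}$. First I would identify the $\g$-orthogonal complement of this hyperplane: a vector $w=\sum_i w_i\,\p\mu_i$ has $\g_\mu(w,v)=\sum_i\mu_i^{-1}w_i v_i=0$ for all admissible $v$ exactly when $\mu_i^{-1}w_i$ is independent of $i$, i.e. when $w$ is a multiple of $\sum_i\mu_i\,\p\mu_i$. I would therefore set $\nabla_\g f=\nabla_\g\tilde f-\kappa\sum_i\mu_i\,\p\mu_i$ and fix $\kappa$ by demanding that the coefficients sum to zero; using $\sum_i\mu_i=1$ on $\Pp_+(\Om_n)$ this forces $\kappa=\sum_i\mu_i\,\p\tilde f/\p\mu_i=\lambda$, leaving the coefficients $\mu_i(\p\tilde f/\p\mu_i-\lambda)$ of (\ref{eq:fgradnp}). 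Equivalently, one reads this off (\ref{eq:proj}): $\Lambda_\mu(\nabla_\g f)=f_\mu-\E_\mu(f_\mu)_{|\Om_n}$ with $f_\mu=\sum_i(\p\tilde f/\p\mu_i)\delta_i$ and $\E_\mu(f_\mu)=\lambda$, then convert back through $\Lambda_\mu(\p\mu_i)=\mu_i^{-1}\delta_i$.

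The computation is routine once (\ref{eq:fisher1}) is available, so I do not expect a genuine obstacle; the only point worth a remark is that (\ref{eq:fgradnp}) is phrased through the partial derivatives of the chosen extension $\tilde f$, which individually depend on how $\tilde f$ is continued off $\Pp_+(\Om_n)$. This ambiguity is harmless: two extensions agreeing on $\Pp_+(\Om_n)$ differ by a function whose restricted differential is proportional to $d(\sum_i\mu_i)=\sum_i d\mu_i$, hence by a term whose ambient gradient is a multiple of the normal $\sum_i\mu_i\,\p\mu_i$, which is precisely what the projection discards. Thus $\nabla_\g f$ is well defined, and the subtraction of $\lambda$ in (\ref{eq:fgradnp}) is exactly the normalization that keeps $\nabla_\g f$ tangent to $\Pp_+(\Om_n)$.
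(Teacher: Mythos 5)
Your proof is correct and takes essentially the same route as the paper: formula (\ref{eq:fgradnm}) is read off the diagonal metric (\ref{eq:fisher1}), and (\ref{eq:fgradnp}) is obtained by $\g$-orthogonally projecting $\nabla_\g\tilde f$ onto $T_\mu\Pp_+(\Om_n)$, with the normal direction identified as $\sum_i\mu_i\,\p\mu_i$. The only cosmetic difference is how $\lambda$ is pinned down: you impose the tangency condition that the coefficients sum to zero, while the paper computes the projection quotient $\g^{-1}(d\tilde f,\sum_i d\mu_i)/\g^{-1}(\sum_i d\mu_i,\sum_i d\mu_i)$ via Proposition \ref{prop:inv}.3 --- both give $\lambda=\sum_i\mu_i\,\p\tilde f/\p\mu_i$.
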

\begin{proof}
1. The first  equation follows immediately from (\ref{eq:fisher1}).

2. Note  that the Fisher gradient  of the restriction  $f$ of a function  $\tilde f$  to $\Pp_+ (\Om_n)$ is the  projection   of the gradient of $\tilde  f$:
$$\nabla _\g f (\mu)  = Pr (\nabla _\g  \tilde f ), $$
where $Pr$ denotes  the  (Fisher) orthogonal projection  on the   tangent  space of  $\Pp_+ (\Om_n)$.
Since  the function $ w(\mu): = \sum _i  \mu_i$ is   equal to 1  on $\Pp_+ (\Om)$, its  Fisher  gradient   $\nabla_\g w = \sum_i \mu_i \p\mu_i $ is orthogonal
to the tangent space  $T_\mu \Pp_+ (\Om_n)$.  Thus  the  Fisher  gradient  of $f$ on $\Pp_+ (\Om_n)$  has the form (\ref{eq:fgradnp}),
where
$$ \lambda = \frac{\g(\nabla_\g  \tilde f , \nabla_\g \tilde  f)}{\g (\nabla_\g w, \nabla _g  w)} = \frac{\g ^{-1} (d\tilde f,\sum_i  d\mu_i)}{\g^{-1} (\sum_i  d\mu_i,\sum_i d\mu_i)}= \sum_i \mu_i {\p \tilde f\over \p \mu_i}.$$
The last equality  follows from Proposition \ref{prop:inv}.3, taking into account $\sum_i \mu_i = 1$.

This completes the proof of Proposition \ref{prop:fgradn}.
\end{proof}
\begin{remark}
	\label{rem:grad} Proposition \ref{prop:fgradn}  shows that  the Fisher  gradient
 of a $C^1$- function $f$ on $\Mm_+ (\Om_n)$ extends smoothly to  the    whole space
 $\Mm (\Om_m)$,  if   $f$  is   the     restriction  of a $C^1$-function $\tilde  f$ on $\Mm(\Om_n)$. 
   Here the smooth  structure  of  the manifold with corner  $\Mm(\Om_n)$ is defined by the natural  inclusion $\Mm(\Om) \INTO  \Ss(\Om) = \R ^n$. 
   This  continuity holds  because  the  inverse $\g^{-1}$ of the Fisher metric $\g$   on $\Mm(\Om)$
 is  a continuous   2-vector  on $\Mm(\Om)$.  This observation  suggests that  a certain blow-up  type of  the Fisher metric  should also   be considered  when we generalize   the classical   Cram\'er-Rao inequality.   We refer  the reader to 
 the  next   part  of our  paper  for  details \cite{JLS2017b}.
\end{remark}


\section{Cram\'er-Rao  inequality on singular  statistical models}\label{sec:CR}

In this section we assume that $(P, \Om, \pb)$ is a 2-integrable statistical model, 
 $V$  a topological vector space 
  and  $\hat \sigma  \in L^2_{\varphi} (P, \Om)$  an estimator for a  $V$-valued   function $\varphi$ on $P$.
	We prove   a general  Cram\'er-Rao inequality (Theorem \ref{thm:CR}) for $\varphi$-regular estimators $\hat \sigma$,  using the  notion of essential tangent  space
	and  reduced Fisher  metric and results in the previous sections.  At the end of the section,  we   derive  from  Theorem \ref{thm:CR}   classical  Cram\'er-Rao inequalities,  compare our results  with other generalizations of the Cram\'er-Rao inequality  and  summarize  our main contributions in this paper.

\subsection{Bias, mean square error and  variance of  an estimator}\label{subs:bias}
In this subsection we  recall the notion of the  bias,  the mean square error   and   the variance of  an estimator   and their relation,  which are generalized immediately
in  our  proposed general setting.
\begin{definition}
The  difference 
\begin{equation}
b^{\varphi}_ {\hat \sigma} :=\varphi_{\hat \sigma} -\varphi \in V^P \label{eq:bias}
\end{equation}
 will be called the {\it  bias of the estimator $\hat \sigma$ w.r.t. the map $\varphi$}. 
\end{definition}

\begin{definition}\label{def:unbi} Given  an estimator 
$\hat \sigma \in L^2_{\varphi} (P, \Om)$ the estimator $\hat \sigma$  will be called {\it $\varphi$-unbiased}, if 
$\varphi_{\hat \sigma} = \varphi$, equivalently, $b^\varphi_{\hat \sigma} = 0$.  
\end{definition}

 Given $\hat \sigma \in L^2_{\varphi} (P, \Om)$, we define  the {\it  $\varphi$-mean square error}    of  an  estimator $\hat \sigma: \Om \to P$  to be the  quadratic  
form   $MSE_{\pb (\xi)}  ^{\varphi}[\hat\sigma]$ on $V^*$   such that for  each $l, k \in V^*$ we have

\begin{equation}
MSE_{\pb (\xi)} ^{\varphi}[\hat\sigma] (l, k) : = \E_{\pb (\xi)}[(\varphi ^l \circ \hat\sigma - \varphi^l \circ  \pb (\xi))\cdot ( \varphi ^{k}\circ \hat\sigma - \varphi^{k} \circ  \pb (\xi))].
\label{eq:mse}
\end{equation}

We also  define the {\it   variance of $\hat\sigma$ w.r.t. $\varphi$}
    to be  the quadratic  form $V_{\pb (\xi)} ^{\varphi}[  \hat\sigma]$ on $V^*$ such that  for all $l, k \in V^*$ we have
\begin{equation}
V_{\pb (\xi)} ^{\varphi}[ \hat\sigma]( l, k) : = \E_{\pb (\xi)}[(\varphi ^l\circ \hat\sigma - \E_{\pb (\xi)} (\varphi^l \circ \hat\sigma))\cdot (\varphi ^{k}\circ \hat\sigma - \E_{\pb (\xi)} (\varphi^{k} \circ \hat\sigma))].\label{eq:var}
\end{equation}
The  RHSs of  (\ref{eq:mse}) and  (\ref{eq:var}) are well-defined, since $\hat\sigma \in L^2_{\varphi}(P, \Om)$.

We shall also use  the following relation
\begin{equation}
MSE_{\pb (\xi)} ^{\varphi}[\hat \sigma ](l,k)  = V_{\pb (\xi)} ^{\varphi}[  \hat\sigma]( l, k)  +  \la b^{\varphi}_{ \hat \sigma}(\xi), l\ra\cdot \la b^{\varphi}_{ \hat \sigma}(\xi), k\ra \label{eq:msevb}
\end{equation}
for all $\xi \in P$  and all $l,k \in V^*$.
Since    for a given $\xi \in P$ the LHS  and RHS of  (\ref{eq:msevb})  are   symmetric bilinear forms  on  $V^*$, it suffices   to prove  (\ref{eq:msevb}) in the case $k = l$. 
We write
$$\varphi ^l\circ \hat\sigma -  \varphi^l \circ  \pb (\xi) = (\varphi ^l\circ \hat\sigma- \E_{\pb (\xi)} (\varphi^l \circ \hat\sigma))  + (\E_{\pb (\xi)} (\varphi^l \circ \hat\sigma) -  \varphi^l \circ  \pb (\xi) )$$
$$ =(\varphi ^l\circ \hat\sigma -\E_{\pb (\xi)} (\varphi^l \circ \hat\sigma)) + \la b^{\varphi}_{\hat \sigma}  (\xi), l\ra.$$
Taking into account that  $\pb (\xi)$ is a  probability measure,  we obtain
\begin{equation}
MSE_{\pb (\xi)} ^{\varphi}[\hat \sigma ](l,l) = V_{\pb (\xi)} ^{\varphi}[\hat \sigma](l, l)  + \la b^{\varphi}_{\hat \sigma}  (\xi), l\ra^2  + 2\int_\Om (\varphi ^l\circ \hat\sigma- \E_{\pb (\xi)} (\varphi^l \circ \hat\sigma))\cdot \la b^{\varphi}_{\hat \sigma}  (\xi), l\ra  d\pb (\xi).\label{eq:msevb1}
\end{equation}
Since $\la b^{\varphi}_{\hat \sigma}  (\xi), l\ra$ does not depend   on $x$, it can be taken out of the integral, and therefore    the last term in the RHS of (\ref{eq:msevb1})  vanishes.  As we have noted  this      proves (\ref{eq:msevb}).

\subsection{A general Cram\'er-Rao inequality}\label{subs:cr}

\begin{proposition}\label{prop:var} Let $(P, \Om,  \pb)$   
be a   2-integrable   statistical model, $\varphi$ - a $V$-valued function on $P$ and  $\hat \sigma \in L^2_{\varphi} (P, \Om)$ - a $\varphi$-regular  estimator.  Then for any $l \in V^*$   and any  $\xi \in P$ we have
$$V_{\pb(\xi)}^{\varphi}[ \hat \sigma](l, l)  : =  \E_{\pb (\xi)}(\varphi ^l\circ \hat\sigma - \E_{\pb (\xi)} (\varphi^l \circ \hat\sigma))^2 \ge \|d\varphi^l_{\hat \sigma}\|^2_{\hat \g^{-1}}(\xi).$$
\end{proposition}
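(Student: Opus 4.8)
The plan is to recognize the variance as the squared $L^2$-norm of a pre-gradient of the visible function $\varphi^l_{\hat\sigma}$, and then to identify the right-hand side as the squared $\hat\g$-norm of its orthogonal projection, so that the asserted inequality reduces to the elementary fact that an orthogonal projection does not increase norm.

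First I would fix $\xi \in P$ and $l \in V^*$, and set $f := \varphi^l\circ\hat\sigma \in L^2(\Om,\pb(\xi))$ together with the visible function $h := \varphi^l_{\hat\sigma}$, which by (\ref{eq:expec}) satisfies $h(\xi) = \E_{\pb(\xi)}(f)$. Writing $\tilde f := f - \E_{\pb(\xi)}(f)$ for the centered function, the definition (\ref{eq:var}) gives directly
$$V^{\varphi}_{\pb(\xi)}[\hat\sigma](l,l) = \E_{\pb(\xi)}(\tilde f^2) = \|\tilde f\|^2_{L^2(\Om,\pb(\xi))}.$$
Next I would invoke Proposition \ref{prop:cov}.2: since $\hat\sigma$ is $\varphi$-regular, $f \in L^2_P(\Om)$ is regular, so the section $\xi \mapsto \tilde f = f - \E_{\pb(\xi)}(f)$ is a pre-gradient of $h$. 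By Lemma \ref{lem:exi1} the generalized Fisher gradient $\nabla_{\hat\g} h$ then exists and is given by $\nabla_{\hat\g}h = e^{-1}(\Pi(\tilde f))$, where $e: \hat T^{\hat\g}_\xi P \to L^2(\Om,\pb(\xi))$ is the isometric embedding of (\ref{eq:embe}) and $\Pi$ is the orthogonal projection onto its closed image $e(\hat T^{\hat\g}_\xi P)$.

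Then I would compute the right-hand side. By the Riesz-representation characterization of the generalized gradient one has $\|d h\|^2_{\hat\g^{-1}}(\xi) = \hat\g(\nabla_{\hat\g}h, \nabla_{\hat\g}h)$, and since $e$ is isometric this equals $\|e(\nabla_{\hat\g}h)\|^2_{L^2} = \|\Pi(\tilde f)\|^2_{L^2}$. As an orthogonal projection does not increase the $L^2$-norm, $\|\Pi(\tilde f)\|_{L^2} \leq \|\tilde f\|_{L^2}$, and combining with the first step gives
$$\|d\varphi^l_{\hat\sigma}\|^2_{\hat\g^{-1}}(\xi) = \|\Pi(\tilde f)\|^2_{L^2} \leq \|\tilde f\|^2_{L^2} = V^{\varphi}_{\pb(\xi)}[\hat\sigma](l,l),$$
which is the claim. (Equivalently, one can bypass the gradient and use the variational description $\|dh\|_{\hat\g^{-1}} = \sup_{v \neq 0}|dh(v)|/\|v\|_{\hat\g}$ together with $dh(v) = \la e(v), \tilde f\ra_{L^2}$ and the Cauchy--Schwarz inequality in $L^2(\Om,\pb(\xi))$.)

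The step I expect to be the main, though mild, obstacle is the bookkeeping around the fiberwise completion $\hat T^{\hat\g}_\xi P$ and the identification $\|dh\|_{\hat\g^{-1}} = \|\nabla_{\hat\g}h\|_{\hat\g}$: one must first ensure that $dh_\xi$ is bounded with respect to the reduced Fisher metric so that the generalized gradient genuinely exists, and this is exactly what the existence of the pre-gradient $\tilde f$ supplies through Lemma \ref{lem:exi1}. Once that is in place, the estimate is nothing but Cauchy--Schwarz in disguise, so no hard analytic estimate is needed; the essential content is that Fisher-orthogonal projection of the centered estimator onto the essential tangent directions is norm-decreasing.
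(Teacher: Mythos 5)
Your proposal is correct and follows essentially the same route as the paper's own proof: both identify the centered estimator $\varphi^l\circ\hat\sigma - \E_{\pb(\xi)}(\varphi^l\circ\hat\sigma)$ as a pre-gradient via Proposition \ref{prop:cov}.2, project it orthogonally onto the closed subspace $e(\hat T^{\hat\g}_\xi P)$ to recover $e(\nabla_{\hat\g}\varphi^l_{\hat\sigma})$, and conclude from the norm-nonincreasing property of the orthogonal projection together with the identity $\|\nabla_{\hat\g}\varphi^l_{\hat\sigma}\|^2_{\hat\g} = \|d\varphi^l_{\hat\sigma}\|^2_{\hat\g^{-1}}$. Your explicit appeal to Lemma \ref{lem:exi1} to guarantee existence of the generalized gradient is exactly the content of the paper's equation (\ref{eq:proj2}), so there is no gap.
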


\begin{proof} Recall  that $e: \hat  T_\xi  ^{\hat  \g}  P  \to  L^2 (\Om, \pb (\xi))$  is an isometric  embedding.
Since $e(\hat T_\xi^{\hat \g} P)$ is   a closed subspace  in $L^2 (\Om, \pb(\xi))$,  we have the orthogonal decomposition
\begin{equation}
 L^2(\Om, \pb(\xi)) = e(\hat T_\xi^{\hat\g} P) \oplus   e(\hat T_\xi^{\g} P) ^\perp.\label{eq:fdecom}
\end{equation}
Denote  by $\Pi_{e (\hat T_\xi^{\hat \g} P)}$ the  orthogonal  projection $ L^2 (\Om, \pb(\xi))$ to $e(\hat T_\xi ^{\hat \g} P, \hat \g)$ according to the  above  decomposition.

By Proposition \ref{prop:cov}.2,   $\varphi^l\circ \hat \sigma - \mathbb \E_{\pb(\xi)}(\varphi^l\circ \hat \sigma)$ is a pre-gradient  of $\varphi^l_{\hat \sigma}$. Hence
\begin{equation}
\Pi_{e (\hat T_\xi^{\hat \g} P)} ({\varphi^l\circ \hat \sigma} - \mathbb \E_{\pb(\xi)}({\varphi^l\circ \hat \sigma})) =  e(\grad _{\hat  \g} \varphi^l_{\hat \sigma}),\label{eq:proj2}
\end{equation}
for $\varphi \in L^2_{\hat\sigma } (P, V)$. 
Using  (\ref{eq:var}) and the decomposition (\ref{eq:fdecom}), we obtain
\begin{equation}
V_{\pb(\xi)}^{\varphi}[ \hat \sigma](l, l)  \ge \|\Pi_{e (\hat T_\xi P)} (\varphi^l\circ \hat \sigma - \mathbb \E_{\pb (\xi)}(\varphi^l\circ \hat \sigma))\|^2_{L^2(\Om, \pb (\xi))} .\label{eq:vpproj}
\end{equation}
Combining (\ref{eq:vpproj}) with (\ref{eq:proj2}), we   derive Proposition  \ref{prop:var} immediately from the following obvious identity (see Def. \ref{def:grad})
$$\|\nabla _{\hat  \g}\varphi^l_{ \hat \sigma}\|^2_{\hat \g}(\xi)  = \|d\varphi^l_{\hat \sigma}\|^2_{\hat \g^{-1}}(\xi).$$ 
\end{proof}

We regard  $\|d\varphi^l_{\hat \sigma}\|^2_{\hat \g ^{-1}} (\xi)$  as a quadratic form on $V^* $  and  denote the latter one  by $ (\hat\g^{\varphi}_{\hat \sigma})^{-1}(\xi)$,  i.e.
$$ (\hat \g^{\varphi}_{\hat \sigma})^{-1}(\xi) (l,k) : = \la d\varphi^l_{\hat \sigma},d\varphi^{k}_{\hat \sigma}\ra _{\hat\g ^{-1}} (\xi).$$

Thus we obtain from  Proposition  \ref{prop:var} the following.

\begin{theorem}\label{thm:CR}(Cram\'er-Rao inequality) Let $(P, \Om,  \pb)$   be a   2-integrable   statistical model, $\varphi$  a $V$-valued function on $P$ and  $\hat \sigma \in L^2_{\varphi} (P, \Om)$  a $\varphi$-regular  estimator.  Then the  
difference $V_{\pb(\xi)}^\varphi[\hat \sigma] -   (\hat \g^{\varphi}_{ \hat \sigma})^{-1} (\xi)$     is   a  positive semi-definite  quadratic form on $V^*$  for any $\xi \in P$. 
\end{theorem}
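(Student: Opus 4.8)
The plan is to obtain the full positive semi-definiteness of the difference directly from the scalar estimate already proved in Proposition \ref{prop:var}, exploiting the elementary fact that a symmetric bilinear form on a real vector space is positive semi-definite precisely when its restriction to the diagonal is non-negative. Thus, once one checks that both terms entering the difference are genuinely symmetric bilinear forms in the arguments $l, k \in V^*$, the theorem becomes a mere restatement of Proposition \ref{prop:var}, which is exactly the diagonal case $k = l$.

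First I would record that the variance $V^\varphi_{\pb(\xi)}[\hat\sigma]$ is, by its defining formula (\ref{eq:var}), a symmetric bilinear form on $V^*$: its integrand is the product of the factor indexed by $l$ with the factor indexed by $k$, hence symmetric under $l \leftrightarrow k$ and linear in each, since $\varphi^l = l \circ \varphi$ depends linearly on $l$ and $\E_{\pb(\xi)}$ is linear. Likewise the form $(\hat\g^\varphi_{\hat\sigma})^{-1}(\xi)$, defined by $(\hat\g^\varphi_{\hat\sigma})^{-1}(\xi)(l,k) := \la d\varphi^l_{\hat\sigma}, d\varphi^k_{\hat\sigma}\ra_{\hat\g^{-1}}(\xi)$, is symmetric and bilinear: symmetry is inherited from that of the inverse reduced Fisher metric $\hat\g^{-1}$, and bilinearity follows from the linear dependence of $d\varphi^l_{\hat\sigma}$ on $l$, which itself rests on the linearity of $l \mapsto \varphi^l_{\hat\sigma}$ through (\ref{eq:expec}). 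Therefore the difference $D(\xi) := V^\varphi_{\pb(\xi)}[\hat\sigma] - (\hat\g^\varphi_{\hat\sigma})^{-1}(\xi)$ is again a symmetric bilinear form on $V^*$, so that calling it a positive semi-definite quadratic form is meaningful.

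Next I would evaluate $D(\xi)$ on the diagonal and invoke Proposition \ref{prop:var}. Matching the notation, the diagonal value of the second form is $(\hat\g^\varphi_{\hat\sigma})^{-1}(\xi)(l,l) = \|d\varphi^l_{\hat\sigma}\|^2_{\hat\g^{-1}}(\xi)$, so that Proposition \ref{prop:var} reads exactly
$$D(\xi)(l,l) = V^\varphi_{\pb(\xi)}[\hat\sigma](l,l) - \|d\varphi^l_{\hat\sigma}\|^2_{\hat\g^{-1}}(\xi) \ge 0$$
for every $l \in V^*$ and every $\xi \in P$, which is the asserted positive semi-definiteness. I do not expect any serious obstacle here: the analytic substance --- existence of the pre-gradient via Proposition \ref{prop:cov}, its passage to the generalized Fisher gradient via Lemma \ref{lem:exi1}, and the orthogonal-projection estimate --- has already been absorbed into Proposition \ref{prop:var}. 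The only points requiring a moment's care are the bilinearity bookkeeping above and the consistent reading of the symbol $(\hat\g^\varphi_{\hat\sigma})^{-1}$, so that its diagonal indeed coincides with the squared norm $\|d\varphi^l_{\hat\sigma}\|^2_{\hat\g^{-1}}$ appearing in Proposition \ref{prop:var}; the case of infinite-dimensional $V$ needs nothing extra, as positive semi-definiteness remains a purely diagonal condition.
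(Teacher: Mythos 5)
Your proposal is correct and is essentially the paper's own argument: the paper likewise derives Theorem \ref{thm:CR} directly from Proposition \ref{prop:var} by observing that the difference $V_{\pb(\xi)}^\varphi[\hat\sigma] - (\hat\g^{\varphi}_{\hat\sigma})^{-1}(\xi)$ is a symmetric bilinear form on $V^*$ whose diagonal values are exactly the quantities bounded below by zero in that proposition. The paper compresses the bilinearity and diagonal-reduction bookkeeping into the single phrase ``Thus we obtain from Proposition \ref{prop:var} the following,'' which you have simply spelled out.
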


This is the general 
Cram\'er-Rao inequality.

\subsection{Classical Cram\'er-Rao inequalities}\label{subs:rcCR}
Our    generalization of the Cram\'er-Rao inequality { (Theorem \ref{thm:CR})}  does not  require   the nondegeneracy of the  (classical)  Fisher  metric  nor the  finite  dimensionality of  statistical models,  nor positivity  of the density functions of   statistical model. When we make such additional assumptions, we regain the various versions of the inequality known in the literature. We shall list  some important examples. After the initial work of  Rao and Cram\'er  on  information lower bounds \cite{Cramer1946, Rao1945},  many versions  of  Cram\'er-Rao   inequalities have appeared in the literature
see e.g. \cite[p. 317]{Witting1985} and the remainder of this  paper,  and the most general among them, as far as we are aware,    is  in \cite{Borovkov1998}.

\

(A)  Assume that  $V$ is finite dimensional and $\varphi$ is a coordinate mapping.  Then $\hat \g = \g$  and $d\varphi^l = d\xi^l$,  and  with (\ref{eq:bias}), abbreviating $b^\varphi_{\hat \sigma}$ as $b$,  we  write 
\begin{equation}
  (\g^{\varphi}_{ \hat \sigma})^{-1} (\xi) (l,k) =    \la \sum_i(\frac{\p\xi ^l}{\p \xi ^i} + \frac{\p b ^l}{\p \xi^i})d\xi^i, \sum_{j}(\frac{\p \xi^{k}}{\p \xi^{j}} + \frac{\p b^{k}}{\p \xi^{j}}) d\xi^{j}\ra_{\g^{-1}}(\xi).\label{eq:finite1}
	\end{equation}
Let $D(\xi)$  be the linear  transformation of  $V$   whose  matrix coordinates are
$$D (\xi)^l_k : = \frac{\p b ^l}{\p \xi^k}.$$
With (\ref{eq:finite1}), the   Cram\'er-Rao inequality in  Theorem  \ref{thm:CR}  becomes
\begin{equation}
V_\xi [\hat \sigma]  \ge  (\E  + D (\xi) )  \g ^{-1} (\xi)  (\E + D (\xi)) ^T. \label{eq:borovkov}
\end{equation}
 The inequality  (\ref{eq:borovkov})  coincides with  the Cram\'er-Rao inequality in \cite[Theorem 1.A, p. 147]{Borovkov1998}.
The condition (R)  in \cite[p. 140, 147]{Borovkov1998}   for the validity  of  the Cram\'er-Rao inequality  is  essentially  equivalent  to 
the  2-integrability  of the (finite dimensional) statistical  model  with  positive regular density  function under consideration, more precisely  Borokov ignores/excludes  the points $x \in \Om$ where
the density function  vanishes  for computing the Fisher metric. Since  we do not assume the  existence  of  a positive  regular  density  function, our set-up  is more general than   that by Borokov. 
Borovkov  also uses the  $\varphi$-regularity assumption, written as $\E_\theta ((\theta ^*)^2) < c < \infty$  for $\theta \in \Theta$, see also \cite[Lemma 1, p. 141]{Borovkov1998}  for a more precise  formulation. 

\

(B)  Specializing further and assuming that    $V = \R$ and $\varphi $ is a coordinate mapping. Then  
\begin{equation}
\E +  D(\xi) =  1 + b_{\hat \sigma } '\label{eq:d=1}
\end{equation}
 where $b_{\hat\sigma}$  is short for $b^{\varphi}_{ \hat \sigma} $.
Using (\ref{eq:d=1})   and (\ref{eq:msevb}), we derive from   (\ref{eq:borovkov})
\begin{equation}
\mathbb \E_\xi (\hat \sigma -\xi) ^2 \ge {[1+ b_{\hat \sigma }' (\xi)]^2\over \g (\xi)} + b_{\hat \sigma} (\xi) ^2. \label{eq:crib4}
\end{equation}
(\ref{eq:crib4}) is identical  with the Cram\'er-Rao  inequality
with a bias term   in \cite[(11.290) p.396,(11.323) p.402]{CT2006}.

\

(C)  Assume that  $V$ is   finite dimensional,  $\varphi$ is a coordinate mapping  and $\hat\sigma$ is $\varphi$-unbiased.
Then   the  terms involving $b_{\hat\sigma}$ vanish, and the Cram\'er-Rao  inequality  in  Theorem \ref{thm:CR}  becomes the well-known  Cram\'er-Rao inequality for an unbiased estimator (see  e.g. \cite[Theorem 2.2, p. 32]{AN2000})
\begin{equation}
 V_\xi [\hat\sigma] \ge \g ^{-1} (\xi). \nonumber
\end{equation}
 
\

(D)  In \cite[Chapter 5]{BKRW1998} Bickel-Klaassen-Ritov-Wellner  consider  efficient  estimations for infinite  dimensional statistical models. They  define the inverse  information  covariance  function  by looking 
at a variation $\p_V \pb$   in the Hilbert   space $L^2(\Om, \pb(x))$, which  is similar  to our idea  in the present paper. They did  not derive  an analogue of the  Cram\'er-Rao inequality. 
 They are mainly interested in the asymptotic  behavior  of  estimators.

\subsection{Janssen's nonparametric  Cram\'er-Rao inequality}\label{subs:jan}
In this Subsection  we compare  our parametric  Cram\'er-Rao  inequality with  Janssen's   nonparametric  Cram\'er-Rao 
inequality \cite{Janssen2003}, which, as far as  we aware of (Subsection  \ref{subs:other}),  is  the version closest  to our work.

The nonparametric setting of Janssen's   work   follows, in particular,   the line of Bickel et al. \cite{BKRW1998}.
Janssen considers a  general measurable space $\Om$  and    a subset $P \subset \Pp(\Om)$ of probability measures  for which he defined  the notion   of a tangent space  using the same method we  define  the tangent  space  for the subset   $\Mm^r(\Om) \subset \Ss^r(\Om)$ in \cite{AJLS2016b}. First  for  $\xi \in \Pp(\Om)$  Janssen  calls  elements of  the  set
$$L_2 ^ {(0)} (\xi) : = \{ g \in L_2 (\Om, \xi)|\,  \int_\Om g d\xi = 0\}$$
{\it tangents  at $\xi$}.
In our language  $T_\xi (\Pp^{1/2} (\Om)) = L_2 ^{(0)}(\xi)\cdot \xi ^{1/2}.$

Janssen calls a curve  $\gamma:  I\to \Pp (\Om)$ {\it $L_2$-differentiable}  at $t = 0 \in I$,  if  there
exists  a tangent   $g \in L_2 ^ {(0)}(\gamma(0))  $ such that  for all sequences $t_n \to 0$  and each finite  dominating measure $\mu$ of $\{  \gamma (t_n) |\,n \in \N \} \cup  \gamma(0)$   we have \cite[(2)]{Janssen2003}
$$\frac{2}{t_n}[ ( \frac{d\gamma (t_n)}{d\mu})^{1/2}- (\frac{d\gamma(0)}{d\mu})^{1/2})] - g( \frac{d\gamma (0)}{d\mu})^{1/2}   \to 0 \in L_2 (\Om, \mu)$$
as $n  \to \infty$.

In our language, using Proposition \ref{prop:mfld-dom}, a curve  $\gamma:  I\to \Pp (\Om)$ is $L_2$-differentiable  iff    the composition  $\pi ^{1/2} \circ \gamma: I \to \Pp^{1/2} (\Om)$ is differentiable. By Theorem \ref{thm:k-integr} the  curve $\gamma(t)$ is 2-integrable.
Moreover  Theorem \ref{thm:k-integr}  asserts that  the $L_2$-differentiability
is equivalent to  the seemingly weaker condition  of   weak continuity  of the Fisher metric.  Thus Theorem \ref{thm:k-integr}  also clarifies the general set-up of Janssen's work.

 Janssen's  statistical functional $\kappa: P \to \R$ is
a (particular) version of   our feature   function $\varphi: P \to V$. (We  shall  discuss  Janssen's general statistical functional $\kappa: P \to  M$ below.) In our  notations, $P$ stands for a parameter  space, and therefore  its image
$\pb (P)$  corresponds  to Janssen's subset $P$ of  probability measures  on
$\Om$.  Thus the composition $\varphi: =\kappa \circ \pb$ is a feature  function.
The difference   is that Janssen wants to estimate  a  probability  measure $\xi \in P$ and we want to estimate {\it  the parameter} of  a  probability measure $\xi$.

In our  work, an estimator   is   a map $\hat \sigma: \Om \to  P$. In Janssen's work
an estimator is a map $\hat \sigma : \Om \to  \R$. So  the composition $\varphi \circ \hat{\sigma }$ in our work corresponds  to   an estimator  in Janssen's work.
Note that  taking    the composition $\varphi \circ \hat \sigma$  is specially important for    estimators on a singular  statistical model, since  only in this form, the function  $\E _{\pb (\xi)}(\varphi^l \circ \hat \sigma)$ is visible, and therefore      in our setting, we can use   the  reduced  Fisher metric,  which is not present  (and not necessary) in nonparametric setting.  This is  the  most  important difference   between our      work and Janssen's work.

In Janssen's  work   the  rule of differentiation under the integral  sign 
\cite[Lemma 1, p. 349]{Janssen2003}  is  a partial case  of  our Proposition \ref{prop:change-diff-int}, namely  for $k = k' = 2$,  and it was known  before
Janssen's work, see  loc. cit.  Once we have this rule,   the    Cram\'er-Rao  
inequality  for  an estimator  $T$  is derived by Janssen  in a similar way  as in   the proof of Proposition  \ref{prop:var}.   Janssen also considers   the general  nonparametric Cram\'er-Rao  inequality  for  an arbitrary   statistical functional $\kappa: P \to M$,
which is expressed  in terms  of a non-negative quadratic form  on a linear space $W$
 of functions on $M$ (so his setting formally    is slightly   larger than ours, where we assume $M$ is a topological vector space  $V$  (and $ W = V^*$) but  essentially equivalent,
 since    the  Cram\'er-Rao inequality depends    on   {\it   the  linear space} $W$ (resp. $V^*$).

\subsection{Comparing with other  generalizations  of   Cram\'er-Rao    inequality}\label{subs:other}
In this paper, using  and  developing our theory   for (possibly infinite dimensional)  parametric   measure models  in \cite{AJLS2015, AJLS2016b},   we are concerned  with  a generalization  of the Cram\'er-Rao inequality  in a parametric setting  where  the Fisher metric may  be degenerate,
the  statistical measure  model under consideration may be infinite dimensional and does not need to  consist  of dominated measures, and moreover,   estimators do not need   to be  unbiased.

We    would like to stress  that  there are many    different  generalizations of the original  Cram\'er-Rao  inequality   \cite{Rao1945, Cramer1946}.
  We  searched in the database of 
Math.Sci.Net  under the key word  ``Cramer-Rao"    in  the title     of  the   papers  reviewed or indexed by  Mat. Sci. Net. 
The query  returns  209  matched items  on  May 08,   2017.
A large    amount of  papers    from the  209 matched items  are devoted to 
applications    and refinements of the Cram\'er-Rao  in special situations.
  
Many generalizations  from the 209 items    are     particular cases  of   our generalization  in the present paper.    Other generalizations  concern  Cram\'er-Rao  type inequality  w.r.t. to a generalized  Fisher metric
(e.g.   in  a  quantum information setting or   the   $q$-Fisher-metric, or    the Fisher metric    derived from other  convex functions),   or   w.r.t.  the generalized covariance  of  an estimator.  There are  also a few  papers  discussing     generalizations of  the Cram\'er-Rao inequality in the presence of a singular  Fisher metric, see  the  item (4) below. We  refer  the reader to  the    paper by Cianchi-Lutwak-Yang-Zhang \cite{CLYZ2014}  and the references therein   as  the  best  recent short   survey   on  generalizations of  Cram\'er-Rao inequalities. Note that the paper by Cianchi-Lutwak-Yang-Zhang is  exclusively concerned with  {\it regular 1-dimensional}  statistical models. Thought the   Cram\'er-Rao inequality    generally  boils down  to an   equality   depending on   a given 
tangent  vector, a   good formulation  for multi-dimensional (possibly infinite dimensional)  statistical models  is  important; in fact,   we don't know  of any 
example  of  an infinite   dimensional  exponential  model  that admits an efficient
unbiased  estimator \cite{JLS2017b, SFKGH2013}.  The regularity  assumption   has been  discussed   in   \cite{AJLS2016b}, see also  Remark \ref{rem:reg}.
Among   results that have not been   discussed  in \cite{CLYZ2014} we  would   like to mention    the  original paper   by  Espinasse \cite{Espinasse2012}, whose generalization   drops the smoothness   assumption of  the statistical model. 
  
 Our results   are most  closely related to Janssen's  nonparametric  Cram\'er-Rao  inequality, which we reviewed above. 
 
 \subsection{Conclusion}\label{subs:conclusion}
 
 To sum up, the most important   contributions  in our paper  are the   following.
 \begin{enumerate}
 \item  Our Cram\'er-Rao lower bound  is defined in the most general terms  based on  
 	our theory  of parametrized measure models developed in \cite{AJLS2015, AJLS2016b}  that encompasses 
 	many partial cases which use more complicated terminology, e.g.   regarding   separate case of Riemannian submanifolds as in \cite{Boumal2013}.   We spell
 	out   properties  of estimators  and estimations  that   do not depend
 	on  the parametrization of  the statistical models under consideration. In particular,   our theory covers the intrinsic  Cram\'er-Rao lower bound 
 	 introduced by S.T. Smith in 2005 
 	\cite{Smith2005}  and developed later  in \cite{Boumal2013}. (The  intrinsic  estimate setting in \cite{Smith2005} is not  completely intrinsic: it is a {\it density estimation problem}  and therefore  depends on the choice  of a dominant  measure. Furthermore, Smith chooses a special  feature   function $\varphi$ using  the exponential map, which is not always globally defined.) 
 	
 	\item  The Fisher  metric in our setting  is defined  without any assumption on the existence 
 	of  a dominating  measure. The   closest  treatment  by Janssen \cite{Janssen2003}  is technically more complicated  and less complete than ours.
 	
 	\item  We clarified  the relation between      different  sufficient  regularity  conditions  (in our language - $2$-integrability) on  the   statistical models. 
 	
 	\item  We treat  the case of a singular  Fisher metric  using the reduced metric.  
 	 The classical remedy  for singularities  of the Fisher metric  is to use the Moore-Penrose pseudoinverse, hereafter referred to as
 	the pseudoinverse, of the Fisher  information matrix, see e.g. \cite{Boumal2013, BHE2009}. Geometrically our  approach is simpler  and geometrically clearer,  which  is particularly important  for  the consideration of the  case  when 
 	our Cram\'er-Rao inequality is optimal, see   our subsequent paper \cite{JLS2017b}.   Our   formulation has an advantage  over the  use  of the Moore-Penrose  pseudoinverse,  since  the later one  is defined 
	{\it with the  help
	of  another    non-degenerate metric}.  It is not hard  to see  that the  Moore-Penrose pseudoinverse  is   equal  to the  inverse   of the  reduced  Fisher  metric.
 \end{enumerate}

Finally we     remark that our theory   can be  coherently and straightforward  extended to  other  natural statistical models with   different types  of singularities,  including  important   compactifications of open  statistical models, e.g.   the  statistical model $\Pp(\Om_n)$ of all nonnegative  probability measures  on a finite  set $\Om_n$ \cite{JLS2017b, JLS2017c}.

\section*{Acknowledgement}
A part of this  paper has been  discussed  during  extended  visits  by HVL and LS  to the Max Planck Institute  for Mathematics  in the  Sciences, and they thank the institute for its hospitality and providing excellent working conditions.

\end{document}